\newtheorem{theorem}{Theorem}[section]
\newtheorem{proposition}[theorem]{Proposition}
\newtheorem{corollary}[theorem]{Corollary}
\newtheorem{lemma}[theorem]{Lemma}
\newcommand{\SL}{\mbox{SL}}
\newcommand{\gby}[1]{\left\langle#1\right\rangle}
\DeclareMathOperator{\tr}{tr}
\DeclareMathOperator{\wt}{wt}
\begin{document}

\title{One-point theta functions for vertex operator algebras}\author{Matthew Krauel\thanks{e-mail: krauel@csus.edu.  Research for this project was supported by the European Research Council (ERC) Grant agreement n. 335220 - AQSER.} \\ {\small Department of Mathematics and Statistics, California State University, Sacramento}}

\date{}
\maketitle

{\abstract
\noindent
One-point theta functions for modules of vertex operator algebras (VOAs) are defined and studied. These functions are a generalization of the character theta functions studied by Miyamoto and are deviations of the classical one-point functions for modules of a VOA. Transformation laws with respect to the group $\text{SL}_2(\mathbb{Z})$ are established.
}\\

\noindent
{\it Keywords:} vertex operator algebras; modular invariance

\section{Introduction\label{intro}}

 As the surge in interest of modular functions surrounding Monstrous Moonshine continued in the late 1980s and early 1990s, mathematical attention quickly expanded to a number theoretic study of the more general $n$-point functions associated to vertex operator algebras (VOAs). Zhu's celebrated (partial) solution \cite{Zhu} to the modularity of such functions consisted of developing recursion formulas enabling $n$-point functions to be written as a combination of classical elliptic and $(n-1)$-point functions, thus reducing the problem to the study of $1$-point functions. The latter of which, \textit{characters} (or \textit{graded dimensions}) associated to modules of VOAs are a special case. Motivated to express transformation properties of trace functions with automorphisms using only ordinary modules, Miyamoto \cite{Miy-Theta} studied a deviation of characters for the modules of a VOA and developed their transformation laws with respect to the group $\text{SL}_2 (\mathbb{Z})$. By exploiting these transformation laws, a number of works have been developed pertaining to elliptic genera \cite{DLiuMa}, Jacobi forms \cite{KrauelMason2, KrauelMason1}, and modular-invariance relations between shifted VOAs and orbifold theory \cite{Yamauchi-Primary}, among others. It is the aim of this paper to develop a $1$-point analogue of Miyamoto's theta functions and deduce their transformation laws with respect to $\text{SL}_2 (\mathbb{Z})$. This generalizes work of Miyamoto \cite{Miy-Theta} and Yamauchi \cite{Yamauchi-Primary}, and helps pave a way for studying $1$-point functions involving multiple variables.
 
  For a VOA $(V,Y(\ast),\textbf{1} ,\omega)$ of central charge $\mathbf{c}$, the vertex operator $Y(v,z):=\linebreak \sum_{n\in \mathbb{Z}} v(n) z^{-n-1}$ identifies infinitely many endomorphisms $v(n)$ to an element $v\in V$. The endomorphism $L(0)$ defined by setting $L(n):=\omega (n+1)$ supplies any ordinary $V$-module $M^\nu$ with an nonnegative integer grading $M^\nu =\bigoplus_{n \geq 0} M^\nu_{\lambda_\nu +n}$, where $\lambda_\nu$ is the conformal weight of $M^\nu$ and $M^\nu_{\lambda_\nu +n}=\{w\in M^\nu \mid L(0)w=(\lambda_\nu +n)w\}$. Here, and throughout the paper, we assume $V$ is rational and $C_2$-cofinite. Rationality implies $V$ has finitely many inequivalent irreducible modules, which we denote $M^1 ,\dots ,M^N$, and each of these possess such an $L(0)$-grading \cite[Theorem $8.1$]{DLM-Twisted}.
 
 For elements $J,K\in V_1$, Miyamoto \cite{Miy-Theta} introduced functions of the form
 \begin{equation}
 \Phi_\nu \left((J,K),\tau \right) := \tr_{M^\nu} e^{2\pi i \left(J(0) +\frac{\langle J,K\rangle}{2}\right)} q^{L(0)+K(0)+\frac{\langle K,K\rangle}{2} -\frac{\mathbf{c}}{24}},
 \end{equation}
 where $\langle \cdot ,\cdot \rangle$ is a bilinear form associated to $V$ (see \cite{Li-Bilinear} for more details about this form). Suppose (i) $V$ is a rational, $C_2$-cofinite VOA, (ii) $J,K \in V_1$ and satisfy $\alpha (n)\beta =\delta_{n,1} \langle \alpha ,\beta \rangle \textbf{1}$ for $\alpha ,\beta \in \{J,K\}$ and integers $n\geq 0$, (iii) $J(0)$ and $K(0)$ act semisimply on $V$ with eigenvalues in $\mathbb{C}$, (iv) $J$ and $K$ are quasi-primary, that is $L(1)J=L(1)K=0$, and (v) each function $\Phi_j$, $1\leq j\leq N$, converges for all $\tau \in \mathbb{H}$.
  Under these assumptions, Miyamoto proves that for each $V$-module $M^\nu$ and every $\gamma =\left(\begin{smallmatrix} a &b\\c &d \end{smallmatrix}\right) \in \text{SL}_2 (\mathbb{Z})$, there are scalars $A_{\nu ,j}^\gamma$ such that\footnote{See the Main Theorem on page $233$ of \cite{Miy-Theta}. The same theorem on page $223$ contains a typo.}
 \begin{equation}
 \Phi_\nu \left((J,K),\frac{a\tau +b}{c\tau +d} \right)= \sum_{j=1}^N A_{\nu ,j}^\gamma \Phi_j \left((bK+dJ,aK+cJ),\tau \right). \label{MiyTrans}
 \end{equation}
 This result generalizes and deviates from previous works. Most substantially, taking $(J,K)=(0,0)$ collapses to the level of characters, or the $n=1$ case with $a_1 =\textbf{1}$ considered by Zhu \cite[Theorem $5.3.2$]{Zhu}. In fact, the scalars $A_{\nu ,j}^\gamma$ are precisely the $S(\gamma, \nu,j)$ found in Zhu's theorem. We also mention the occurrence of this result with $K=0$ in the theory of vertex operator superalgebras \cite[Theorem $5.4$]{ADL-Modularity}.\\
 \indent To unravel the more general story of $1$-point functions and their deviations, we must introduce the \textit{zero mode} $o(v):=v(\wt v-1)$ of a homogeneous element $v\in V_{\wt v}$, where $\wt v$ denotes the weight of $v$ with respect to the $L(0)$-grading. We extend this definition linearly and note $o(v)$ is the unique endomorphism associated to $v$ which preserves the grading induced by $o(\omega)=L(0)$. Additionally, it is necessary to discuss the relationship between the original structure of a VOA $(V,Y(\ast),\textbf{1} ,\omega)$ and the change of coordinate VOA $(V,Y[\ast],\textbf{1} ,\widetilde{\omega})$ considered by Zhu \cite[Theorem $4.2.1$]{Zhu}. By setting $\widetilde{\omega}:=\omega -\frac{\mathbf{c}}{24}\textbf{1}$ and $Y[v,z]:=Y(e^{zo(\omega)}v, e^{z} -1) =:\sum_{n\in \mathbb{Z}} v[n] z^{-n-1}$ as in \cite{DLM-Orbifold}, $V$ acquires a different VOA structure with grading $V =\bigoplus_{n\in \mathbb{Z}} V_{[n]}$, where $V_{[n]}$ are now eigenspaces with respect to the $n=0$ case of the operators $L[n]=\widetilde{\omega}[n+1]$. If $v$ is homogeneous with respect to $L[0]$, we denote its weight by $\wt [v]$ for $v\in V_{[\wt [v]]}$. We refer the reader to Section \ref{SectionBasics} below and \cite{DLM-Orbifold, Zhu} for more details about these different VOA structures.\\
  \indent While Miyamoto's theorem continues to be used, a $1$-point analogue has thus far been lacking. Indeed, Miyamoto's results can be interpreted as the $v=\textbf{1}$ case of the functions
 \begin{equation}
 \Phi_j \left(v:(J,K),\tau \right) := \tr_{M^j} e^{2\pi i \left(o(J) +\frac{\langle J,K\rangle}{2}\right)} o(v) q^{o(\widetilde{\omega})+o(K)+\frac{\langle K,K\rangle}{2}}, \label{PHI}
 \end{equation}
 and the primary purpose of this paper is to develop transformation laws for functions of this type for any $v\in V$. To state our main result, we first define functions $\Phi_{j,\ell}$ for integers $\ell \geq 0$ and $v\in V$ by
 \begin{equation}
 \Phi_{j,\ell} \left(v:(J,K),\tau \right) := \frac{1}{\ell !}\Phi_j \left( (J+\tau K)[1]^\ell v :(J,K),\tau \right).
 \end{equation}
 
  Under precisely the same assumptions (i)--(v) above applied to the functions (\ref{PHI}), we obtain the following theorem.
 
\begin{theorem} \label{ThmMain}
Suppose (i) $V$ is a rational, $C_2$-cofinite VOA, (ii) $J,K \in V_1$ and satisfy $\alpha (n)\beta =\delta_{n,1} \langle \alpha ,\beta \rangle \textbf{1}$ for $\alpha ,\beta \in \{J,K\}$ and integers $n\geq 0$, (iii) $J(0)$ and $K(0)$ act semisimply on $V$ with eigenvalues in $\mathbb{C}$, (iv) $J$ and $K$ satisfy $L(1)J=L(1)K=0$, and (v) each function $\Phi_j$, $1\leq j\leq N$, converges for all $\tau \in \mathbb{H}$. Then for any $v\in V_{[\wt [v]]}$ and $\gamma =\left(\begin{smallmatrix} a &b\\c &d \end{smallmatrix}\right) \in \text{SL}_2 (\mathbb{Z})$, we have 
  \begin{equation}
  \begin{aligned}
 \Phi_j &\left(v:(J,K),\frac{a\tau +b}{c\tau +d} \right)\\
 &=(c\tau +d)^{\wt [v]} \sum_{k=1}^N A_{j,k}^\gamma \Phi_k \left(e^{cJ[1]+\frac{a\tau +b}{c\tau +d}cK[1]}v:(bK+dJ,aK+cJ),\tau \right),
 \end{aligned} \label{KTrans1}
 \end{equation}
  where the constants $A_{j,k}^\gamma$ are precisely those that arise in \cite[Theorem 5.3.2]{Zhu}. In other words,
 \begin{equation}
 \begin{aligned}
 \Phi_j &\left(v:(J,K),\frac{a\tau +b}{c\tau +d} \right) \\
 &= (c\tau +d)^{\wt [v]} \sum_{k=1}^N A_{j,k}^\gamma \sum_{\ell =0}^{\wt [v]} \Phi_{k,\ell} \left(v:(bK+dJ,aK+cJ),\tau \right)\left(\frac{c}{c\tau +d}\right)^\ell .
 \end{aligned} \label{KTrans2}
 \end{equation}
 Additionally, ignoring convergence and condition (v), these transformation rules hold so long as $J(0)v=K(0)v=0$.
\end{theorem}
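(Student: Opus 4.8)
The plan is to combine Zhu's method for one-point functions (\cite[Theorem 5.3.2]{Zhu}) with Miyamoto's method for the theta deformation (\cite{Miy-Theta}, i.e. the case $v=\textbf{1}$ recorded in (\ref{MiyTrans})). I would begin with two reductions. First, (\ref{KTrans1}) and (\ref{KTrans2}) are equivalent: writing $\tilde J:=bK+dJ$, $\tilde K:=aK+cJ$ one has the change-of-variables identity $\tilde J+\tau\tilde K=(c\tau+d)\bigl(J+\tfrac{a\tau+b}{c\tau+d}K\bigr)$, so that $(\tilde J+\tau\tilde K)[1]^\ell=(c\tau+d)^\ell\bigl(J+\tfrac{a\tau+b}{c\tau+d}K\bigr)[1]^\ell$; substituting this into the definition of $\Phi_{k,\ell}$ and re-summing $\sum_{\ell\ge0}\tfrac1{\ell!}\bigl(cJ[1]+\tfrac{a\tau+b}{c\tau+d}cK[1]\bigr)^\ell v$ turns the right-hand side of (\ref{KTrans2}) into that of (\ref{KTrans1}), the range $0\le\ell\le\wt[v]$ being automatic since $a[1]$ lowers $L[0]$-weight by one for $a\in V_{[1]}$. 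Second, the case $v=\textbf{1}$ is exactly (\ref{MiyTrans}): there $\wt[\textbf{1}]=0$ and $(J+\tau K)[1]\textbf{1}=0$, so both the automorphy factor and the exponential operator are trivial.

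The body of the argument is a Zhu-type recursion for the functions in (\ref{PHI}), carried out in the square-bracket VOA structure and decorated with the theta deformation $e^{2\pi i o(J)}q^{o(K)}$. Using the standard spanning property that $V$ is spanned by iterated products $a^1[-1]\cdots a^r[-1]\textbf{1}$ with the $a^i$ homogeneous of positive weight (via $a[-n]u=\tfrac1{(n-1)!}(L[-1]^{n-1}a)[-1]u$), one reduces to controlling how $\Phi_j(a[-1]u:(J,K),\tau)$ is built, with Eisenstein-series coefficients $G_{2k}(\tau)$ and constants from the pairings $\langle\alpha,\beta\rangle$ of (ii), out of $\Phi_j$'s of lower complexity. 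Transporting the recursion through $\tau\mapsto\gamma\tau$, and using the classical transformation laws of the $G_{2k}$ together with Miyamoto's base case (\ref{MiyTrans}) and \cite[Theorem 5.3.2]{Zhu} as the undeformed comparison, one shows the span of the $\Phi_j$'s is finite-dimensional and $\text{SL}_2(\mathbb{Z})$-stable and identifies the transformation constants as the $A^\gamma_{j,k}$. The factor $(c\tau+d)^{\wt[v]}$ is produced one power at a time by the recursion, and the exponential operator $e^{cJ[1]+\frac{a\tau+b}{c\tau+d}cK[1]}$ appears from the quasi-modular anomaly in the transformation of $G_2$ together with the interaction of the modular transform with the deformation $e^{2\pi i o(J)}q^{o(K)}$, now felt by the inserted vector through the modes $J[1]$, $K[1]$; the change-of-variables identity of the first paragraph is what repackages these contributions into a single exponential. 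For the final sentence: if $J(0)v=K(0)v=0$ then the commutator formula $[o(a),o(x)]=\sum_{i\ge0}\binom{\wt a-1}{i}o(a(i)x)$ shows $o(v)$ commutes with $o(J)$ and $o(K)$, so every step becomes a valid identity of formal $q$-series and the analytic hypothesis (v) is unnecessary.

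The main obstacle is setting up this recursion with complete precision — keeping exact track of how the deformation $e^{2\pi i o(J)}q^{o(K)}$ of (\ref{PHI}) interacts with the inserted zero mode $o(a[-1]u)$, isolating the terms in which $a$ is paired against $J$ and $K$, and then verifying that after $\tau\mapsto\gamma\tau$ the correction terms reorganize, term by term, into the Taylor coefficients of $e^{cJ[1]+\frac{a\tau+b}{c\tau+d}cK[1]}v$ rather than into some other operator. An alternative that trades part of this bookkeeping is to realize $\Phi_j(v:(J,K),\tau)$ as an ordinary one-point function deformed by Li's $\Delta$-operator for the weight-one fields $J,K$, apply \cite[Theorem 5.3.2]{Zhu}, and use the transformation of $\Delta$ — the route closest to the treatment of Jacobi forms in \cite{KrauelMason1} and to the shifted-VOA picture of \cite{Yamauchi-Primary}. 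Once the recursion is correct, what remains is routine manipulation with Eisenstein series and the fixed modular data $A^\gamma_{j,k}$.
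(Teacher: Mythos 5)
There is a genuine gap: the entire analytic/combinatorial core of the theorem is exactly what you set aside as ``the main obstacle.'' Your first paragraph (the equivalence of \eqref{KTrans1} and \eqref{KTrans2} via $\tilde J+\tau\tilde K=(c\tau+d)\bigl(J+\tfrac{a\tau+b}{c\tau+d}K\bigr)$, and the $v=\textbf{1}$ base case being \eqref{MiyTrans}) is fine and matches the closing computation of the paper's proof. But the body of your argument is a plan, not a proof: you do not set up the deformed recursion, and you do not verify that the quasi-modular anomalies reorganize into the Taylor coefficients of $e^{cJ[1]+\frac{a\tau+b}{c\tau+d}cK[1]}v$ --- which is precisely the content the paper spends Section \ref{SectionProofs} establishing. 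Concretely, the paper does not run a $v=a[-1]u$ induction with Eisenstein series $G_{2k}$; in the presence of the insertion $e^{2\pi i o(J)}q^{o(K)}$ the relevant coefficients are the functions $P_k(x-z_i,\tau)$ attached to $(n+1)$-point functions with $J$'s and $K$'s inserted at auxiliary points $z_i$ (Proposition \ref{Prop1}), the $\SL_2(\mathbb{Z})$ action enters through the $n$-point modularity \eqref{ZhuTheorem}, and a delicate inclusion--exclusion over involutions (Lemmas \ref{LemmaB}--\ref{LemmaD}, Proposition \ref{HardThm1}) is needed to show all $P_k$-terms cancel and only the anomalies of $P_1,P_2$ survive, producing $v_s[1]$ and the bilinear-form factors; only then does resumming $e^{2\pi i o(J)}q^{o(K)}$ over $o(J)^s o(K)^t$ with the involution count give the exponential operator and the constants $A^\gamma_{j,k}$. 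Asserting that ``the span of the $\Phi_j$'s is finite-dimensional and $\SL_2(\mathbb{Z})$-stable'' does not identify either the specific constants or the operator $e^{cJ[1]+\frac{a\tau+b}{c\tau+d}cK[1]}$.

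Two further specific problems. First, your fallback route (realize $\Phi_j$ via Li's $\Delta$-operator and apply Zhu) is essentially Yamauchi's argument and requires $o(J)$, $o(K)$ to have rational eigenvalues, since $e^{2\pi i o(J)}$ must be a finite-order automorphism for the orbifold modularity of \cite{DLM-Orbifold} to apply; the theorem's point is to allow complex eigenvalues, so this alternative cannot prove the stated generality (the paper derives the Yamauchi-type statement, Corollary \ref{ThmMain2}, as a consequence, not an input). Second, your treatment of the hypothesis $J(0)v=K(0)v=0$ misses its actual role: it is not merely that $o(v)$ then commutes with $o(J)$ and $o(K)$, but that it kills the $A_\gamma(x_s,\tau)v_s[0]$ part of the anomaly $B_s$ in Lemma \ref{LemmaD}/Proposition \ref{HardThm1}, leaving only the $v_s[1]$ contributions that assemble into the exponential; and the reduction of general $v$ (under hypothesis (v)) to this case is a separate step, Lemma \ref{LemNew}, showing $\Phi_j$ vanishes identically on nonzero simultaneous eigencomponents of $J(0),K(0)$. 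Neither of these reductions appears in your proposal.
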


 \indent Theorem \ref{ThmMain} establishes a type of quasi-modular form transformation property. Cleaner modular transformation laws similar to those satisfied by theta functions arise when $K[1]v=J[1]v=0$. When $(J,K)=(0,0)$, Theorem \ref{ThmMain} collapses to the main theorem in \cite{Zhu}, while the $v=\textbf{1}$ case gives the main result of \cite{Miy-Theta}. Meanwhile, the condition $(J,K)=(J,0)$ with $o(J)$ having rational eigenvalues produces a case of the modular transformations in \cite{DLM-Orbifold}. Other relevant results include Theorem 9.13 in \cite{HV-Susy}, which is a generalization of a special case of Theorem \ref{ThmMain}, as well as the similar, but independently developed Theorem 1.2 in \cite{AE-Relatively}.
 
  Additionally, we note how Theorem \ref{ThmMain} relates with another formulation of VOA theta functions due to Yamauchi \cite{Yamauchi-Primary}. By restricting attention to the case when $o(J)$ and $o(K)$ have rational eigenvalues, Yamauchi develops a generalization of Miyamoto's functions to incorporate automorphisms of $V$, as well as certain $1$-point elements. Utilizing theory related to shifted VOAs and orbifolds, he also establishes convergence on $\mathbb{H}$ for these functions by invoking work in \cite{DLM-Orbifold}. Specifically, for an element $v\in V_{[\wt [v]]}$, the $1$-point insertion $o\left(\sum_{s=0}^\infty p_s (J(1),J(2),\dots)v \right)$ is considered, where $p_s (J(1),J(2),\dots)$ are Schur polynomials defined by
 \begin{equation}
 z^{-J(0)}\Delta_J (z):=\exp \left(-\sum_{n= 1}^\infty \frac{J(n)}{n} (-z)^{-n} \right) =: \sum_{s=0}^\infty p_s (J(1),J(2),\dots)z^{s}. \label{IntroLi}
 \end{equation}
 Yamauchi employs results due to Li \cite{Li}, where $\Delta_J (z)$ is introduced and used to establish isomorphisms among twisted $V$-modules. It can be seen that
 \begin{equation}
 e^{J[1]}= z^{-J(0)}\Delta_J (1)=\sum_{s=0}^\infty p_s (J(1),J(2),\dots). \label{J[1]eq}
 \end{equation}
 Along with results such as Proposition 9 in \cite{MTZ}, this helps illuminate the connection between shifted VOA structures, orbifold theory, and the elliptic and modular form properties of $1$-point functions.\\
 \indent Set
 \begin{equation}
 \Psi_j \left(v:(J,K),\tau \right):= \Phi_j \left( e^{K[1]}v: (J,K),\tau \right). \label{YamFun}
 \end{equation}
 Using Theorem \ref{ThmMain} we obtain the following result analogous to the $(g,h)=(1,1)$ case of the main result of \cite{Yamauchi-Primary}, Theorem $1.1$, but allowing for complex eigenvalues of $o(J)$ and $o(K)$.
 
 \begin{corollary}\label{ThmMain2}
 Assume the same conditions as in Theorem \ref{ThmMain}. Then for any $v\in V_{[\wt [v]]}$ and $\gamma =\left(\begin{smallmatrix} a &b\\c &d \end{smallmatrix}\right) \in \text{SL}_2 (\mathbb{Z})$, we have
  \begin{equation}
 \Psi_\nu \left(v:(J,K),\frac{a\tau +b}{c\tau +d} \right)
 = (c\tau +d)^{\wt [v]} \sum_{k=1}^N A_{j,k}^\gamma \Psi_{k} \left(v:(bK+dJ,aK+cJ),\tau \right),
 \label{KTrans3}
 \end{equation}
 where the constants $A_{j,k}^\gamma$ are precisely those that arise in \cite[Theorem 5.3.2]{Zhu}.
 \end{corollary}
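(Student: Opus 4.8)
The plan is to obtain Corollary~\ref{ThmMain2} from Theorem~\ref{ThmMain} by applying the latter to the state $e^{K[1]}v$ and resumming; the point is that the extra exponential hidden in the definition of $\Psi$ is exactly what is needed to absorb the $K[1]$-dependence appearing on the right-hand side of \eqref{KTrans1}.

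First I would record the structural facts that make this resummation legitimate. Since $K\in V_1$ with $L(1)K=0$, a short computation (using $L(1)K=0$ and $V_{1-j}=0$ for $j\ge 2$) gives $L[0]K=L(0)K=K$, so $\wt[K]=1$ and the operator $K[1]$ lowers the $L[0]$-grading by exactly one; as $V_{[n]}=0$ for $n<0$, the sum $e^{K[1]}v=\sum_{m=0}^{\wt[v]}\tfrac1{m!}K[1]^m v$ is finite, with $K[1]^m v\in V_{[\wt[v]-m]}$. Because $o(\cdot)$, and hence each $\Phi_k(\,\cdot:(J,K),\tau)$, is linear in its middle argument, Theorem~\ref{ThmMain} may be applied termwise to the $K[1]^m v$. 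I would also note that, translating condition~(ii) into the square-bracket coordinates, $J[0]K=0$ and $J[1]K=\langle J,K\rangle\mathbf 1$ (and likewise for every pair drawn from $\{J,K\}$), so that $[J[1],K[1]]=(J[0]K)[2]+(J[1]K)[1]=0$ since $\mathbf 1[m]=0$ for $m\ge0$; thus $J[1]$ and $K[1]$ commute on $V$ and their exponentials add.

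With $J'=bK+dJ$ and $K'=aK+cJ$, applying \eqref{KTrans1} to each summand $K[1]^m v$ (of $L[0]$-weight $\wt[v]-m$) and adding gives
\[
\Psi_j\!\left(v:(J,K),\tfrac{a\tau+b}{c\tau+d}\right)
=(c\tau+d)^{\wt[v]}\sum_{k=1}^{N}A^{\gamma}_{j,k}\,
\Phi_k\!\left(e^{\,cJ[1]+\frac{a\tau+b}{c\tau+d}cK[1]}\,e^{\frac{1}{c\tau+d}K[1]}v:(J',K'),\tau\right),
\]
where the $m$-th term's factor $(c\tau+d)^{-m}$ (coming from $(c\tau+d)^{\wt[v]-m}$) reassembled $e^{\frac1{c\tau+d}K[1]}=\sum_m\tfrac1{m!}(c\tau+d)^{-m}K[1]^m$ and the fixed operator was pulled out of the sum by linearity. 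By commutativity the two exponential factors merge, and since $ad-bc=1$,
\[
\frac{c(a\tau+b)}{c\tau+d}+\frac{1}{c\tau+d}=\frac{ca\tau+cb+1}{c\tau+d}=\frac{ca\tau+ad}{c\tau+d}=a .
\]
Hence the inserted vector is $e^{cJ[1]+aK[1]}v=e^{(cJ+aK)[1]}v=e^{K'[1]}v$, and the right side becomes $(c\tau+d)^{\wt[v]}\sum_k A^{\gamma}_{j,k}\,\Phi_k(e^{K'[1]}v:(J',K'),\tau)=(c\tau+d)^{\wt[v]}\sum_k A^{\gamma}_{j,k}\,\Psi_k(v:(J',K'),\tau)$, which is \eqref{KTrans3}.

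I expect no genuine obstacle beyond careful bookkeeping: the substantive content is the scalar identity $\tfrac{c(a\tau+b)}{c\tau+d}+\tfrac1{c\tau+d}=a$ together with the relation $[J[1],K[1]]=0$; the fact that hypotheses~(i)--(v) for $(J,K)$ carry over to $(J',K')$ is already built into Theorem~\ref{ThmMain} (it is exactly what permits iterating Miyamoto-type transformations). For the last clause, when convergence and~(v) are dropped one needs $J(0)$ and $K(0)$ to annihilate each $K[1]^m v$; this follows from $[J(0),K[1]]=(J(0)K)[1]=0$ and $[K(0),K[1]]=0$ once $J(0)v=K(0)v=0$, so the final sentence of Theorem~\ref{ThmMain} again applies termwise.
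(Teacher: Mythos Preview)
Your proposal is correct and follows essentially the same approach as the paper: expand $e^{K[1]}v$ as a finite sum, apply \eqref{KTrans1} termwise to $K[1]^m v\in V_{[\wt[v]-m]}$, absorb the resulting $(c\tau+d)^{-m}$ factors into $e^{\frac{1}{c\tau+d}K[1]}$, and combine exponentials using the scalar identity (which the paper records as \eqref{baaah}, namely $\frac{c(a\tau+b)}{c\tau+d}=a-\frac{1}{c\tau+d}$) together with $[J[1],K[1]]=0$ from Lemma~\ref{LemWt1}. Your explicit justification of that commutator and of the final-clause compatibility (via $[J(0),K[1]]=0$) is a nice addition that the paper leaves implicit.
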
 
 
 Both Theorem \ref{ThmMain} and Corollary \ref{ThmMain2} are proved in Section \ref{SectionProofs}. We recall they assume convergence. However, as noted before, some situations of convergence are known. For example, a statement of convergence for functions with $v=\textbf{1}$ is made in Proposition $1.8$ of \cite{DLiuMa}. Therefore, $\Phi_j$ of any $v$ corresponding to the derivative with respect to $\tau$ of $\Phi_j (\textbf{1}:(J,K),\tau)$ also converge on this domain. Meanwhile, as mentioned above, the convergence of the functions $\Psi_j$ for all $v$ on $\mathbb{H}$ when $o(J)$ and $o(K)$ have rational eigenvalues is in \cite{Yamauchi-Primary}. 
 
 In Section \ref{SectionExamples} we explore an example and, using Theorem \ref{ThmMain}, establish another proof of the quasi-modular properties for a partial derivative of the Jacobi theta functions $\vartheta_j$. Additionally, just as in the case of Miyamoto's original work, the case $K=0$ is of particular importance, and the transformation laws of Theorem \ref{ThmMain} can be exploited to establish a portion of the transformation laws for (quasi) Jacobi forms for strongly regular VOAs. This will be explored elsewhere.


\section{Preliminaries and notation\label{SectionBasics}}


\subsection{Elliptic functions}
 
 Let $q_x$ denote $e^{2\pi ix}$ for a variable $x$.
 Define the functions $P_k (\tau ,z)$ for $k\geq 1$ by
 \begin{equation}
 P_k (\tau ,z):= \frac{1}{(k-1)!} \sum_{n\in \mathbb{Z}\setminus \{0\}} \frac{n^{k-1} {q}_z^{n}}{1- q^n}. \notag 
 \end{equation}
 These functions, when multiplied by $(2\pi i)^{k}$, are the functions $P_k (q_z ,q)$ in \cite{Miy-Theta, Zhu}.
 For $\gamma = \left( \begin{smallmatrix} a&b \\c&d \end{smallmatrix} \right) \in \SL_2 (\mathbb{Z})$ and $k\geq 3$, they have the transformation properties
 \[
 P_k \left(\frac{a\tau +b}{c\tau +d} ,\frac{z}{c\tau +d} \right) =(c\tau +d)^k P_k (\tau ,z).
 \]
 Meanwhile, for $k=1,2$, we have
 \[
 P_1 \left(\frac{a\tau +b}{c\tau +d} ,\frac{z}{c\tau +d} \right) =(c\tau +d) P_1 (\tau ,z) +\frac{c\tau +d}{2} - cz -\frac{1}{2}
 \]
 and
 \[
 P_2 \left(\frac{a\tau +b}{c\tau +d} ,\frac{z}{c\tau +d} \right) =(c\tau +d)^2 P_2 (\tau ,z) - \frac{c(c\tau +d)}{2\pi i}.
 \] 


\subsection{Weight one elements of a VOA}

The relationship between modes of an element $u\in V$ under the original VOA structure and that of the change of coordinate VOA is given by
\begin{equation}
 u[m] = m! \sum_{i\geq m} c(\wt u,i,m)u(i), \label{bracket1}
\end{equation}
where $c(\wt u,i,m)$ are defined by the coefficients of the series 
\begin{equation}
 m!\sum_{i\geq m} c(\wt u,i,m)x^i := (\ln (1+x))^m (1+x)^{\wt u -1} \label{bracket2}
\end{equation}
expanded in the variable $x$ (see, for example, \cite[Lemma $4.3.1$]{Zhu}). Since $\ln (1+x) =-\sum_{n=1}^\infty \frac{(-1)^n}{n}x^n$, we have $c(1,i,1)=-\frac{(-1)^i}{i}$ and combining with (\ref{bracket1}) we find (\ref{J[1]eq}) holds as stated in the introduction. Additionally, (\ref{bracket1}) implies $L[0]=L(0)+\sum_{i\geq 0} k_i L(i)$ for some scalars $k_i$. It follows that if $u\in V_1$ and $L(1)u =0$, then $u\in V_{[1]}$ as well. We also note that for $u\in V_1$, (\ref{bracket2}) gives $c(1,i,0)=\delta_{i,0}$ and thus $u[0]=u(0)$. Suppose $u,v\in V_1$ and $u(k)v=\delta_{k,1} \langle u,v\rangle \textbf{1}\in V_0$ for $k\geq 0$. Since (\ref{bracket2}) gives $c(1,1,1)=1$ under this assumption, (\ref{bracket1}) implies $u[k]v=\delta_{k,1}u(1)v=\delta_{k,1} \langle u,v\rangle \textbf{1}$. Additionally, we have for all $s,t \geq 0$ that
 \begin{equation}
 u [s]v [t] = v [t] u [s] +[u [s] , v [t]] = v [t] u [s] +\sum_{k\geq 0} \binom{s}{k} (u [k] v)[s+t-k] 
 = v [t] u [s].
 \label{commutation}
 \end{equation} 
 
 As these simple results will be referenced later, we collect them in the following lemma.
 
 \begin{lemma}\label{LemWt1}
  Suppose $v_1 ,\dots ,v_n \in V_1$ and for every $1\leq i,j \leq n$ we have
 \begin{enumerate}
 \item[(a)] $v_i (k) v_j = \delta_{k,1} \langle v_i ,v_j \rangle \textbf{1}$ for $k\geq 0$, and \label{C3}
 \item[(b)] $v_1 ,\dots ,v_n$ are quasi-primary (and thus primary), that is $L(1)v_j=0$.
 \end{enumerate}
 Then for any $1\leq i,j \leq n$ and $v\in V$,
 \begin{enumerate}
 \item $v_j (0)v=v_j [0]v$,
 \item $v_j \in V_{[1]}$,
 \item $v_i [k] v_j =\delta_{k,1} \langle v_i,v_j \rangle \textbf{1}$ for $k\geq 0$, and
 \item $v_i [s]v_j [t] =v_j [t]v_i[s]$ for all $s,t \geq 0$.
 \end{enumerate}
 \end{lemma}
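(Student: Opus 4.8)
The plan is to verify each of the four claims by unwinding the definitions supplied just before the lemma statement, since the preceding paragraph has essentially proved everything for a single element and all that remains is to observe that the hypotheses persist uniformly over the family $v_1,\dots,v_n$.

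First I would dispose of claims (1) and (2). For claim (1), hypothesis (b) gives $L(1)v_j=0$, and since $v_j\in V_1$, equation~(\ref{bracket2}) yields $c(1,i,0)=\delta_{i,0}$, so $v_j[0]=v_j(0)$ directly from~(\ref{bracket1}). For claim (2), I would invoke the observation already recorded in the text: equation~(\ref{bracket1}) implies $L[0]=L(0)+\sum_{i\geq 0}k_i L(i)$ for scalars $k_i$, and when $v_j\in V_1$ with $L(i)v_j=0$ for all $i\geq 1$ (which follows from $L(1)v_j=0$ together with $L(i)v_j\in V_{1-i}=0$ for $i\geq 2$), we get $L[0]v_j=L(0)v_j=v_j$, hence $v_j\in V_{[1]}$.

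Next I would handle claim (3). Fix $i,j$. Hypothesis (a) says $v_i(k)v_j=\delta_{k,1}\langle v_i,v_j\rangle\mathbf 1$ for $k\geq 0$. Because $v_i\in V_1$, equation~(\ref{bracket2}) gives $c(1,1,1)=1$, so~(\ref{bracket1}) expresses $v_i[1]v_j$ as $v_i(1)v_j$ plus a sum of terms $v_i(i')v_j$ with $i'\geq 2$, each of which vanishes by hypothesis (a); thus $v_i[1]v_j=v_i(1)v_j=\langle v_i,v_j\rangle\mathbf 1$. For $k\geq 2$, every term $v_i(i')v_j$ with $i'\geq k\geq 2$ appearing in~(\ref{bracket1}) vanishes, so $v_i[k]v_j=0$. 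And for $k=0$, claim (1) already gives $v_i[0]v_j=v_i(0)v_j=0$. This is exactly the computation displayed for a single pair $u,v$ in the text, now applied to the pair $(v_i,v_j)$.

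Finally, claim (4) follows from claim (3) by the commutator formula exactly as in~(\ref{commutation}): for $s,t\geq 0$,
\[
v_i[s]v_j[t]=v_j[t]v_i[s]+\sum_{k\geq 0}\binom{s}{k}(v_i[k]v_j)[s+t-k],
\]
and by claim (3) the only potentially nonzero term in the sum is the $k=1$ term, which is $\binom{s}{1}(\langle v_i,v_j\rangle\mathbf 1)[s+t-1]$; but $\mathbf 1[m]=\mathbf 1(m)=\delta_{m,-1}\mathrm{id}$ vanishes for $m=s+t-1\geq -1$ unless $s+t-1=-1$, which is impossible when $s,t\geq 0$ (it would force $s=t=0$, but then $\binom{0}{1}=0$ anyway). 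Hence the sum vanishes and $v_i[s]v_j[t]=v_j[t]v_i[s]$. I do not anticipate a genuine obstacle here; the only mild subtlety is bookkeeping the index ranges in~(\ref{bracket1}) carefully enough to see that all unwanted terms drop out, and confirming that the central term $(\langle v_i,v_j\rangle\mathbf 1)[m]$ acts as zero on $V$ for the relevant $m$.
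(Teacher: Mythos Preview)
Your proposal is correct and follows essentially the same approach as the paper: you reproduce the computations already carried out in the paragraph preceding the lemma (using~(\ref{bracket1}) and~(\ref{bracket2}) to relate the round and square modes, and the commutator formula~(\ref{commutation}) for claim~(4)), simply applied uniformly to each pair $(v_i,v_j)$. The only minor remark is that hypothesis~(b) is not actually needed for claim~(1), which follows from $v_j\in V_1$ alone; otherwise your bookkeeping matches the paper's.
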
 
 
 

\section{Proofs of theorems\label{SectionProofs}}

The proof of Theorem \ref{ThmMain} follows the ideas developed by Miyamoto in \cite{Miy-Theta}. In particular, we are ultimately interested in the transformation properties for expressions of the form
 \[
 \sum_{\ell_1 ,\ell_2 =0}^\infty \frac{1}{\ell_1! \ell_2!} \tr_{M^\nu} o(J)^{\ell_1} o(\tau K)^{\ell_2} o(v) q^{o(\widetilde{\omega})}
 \]
 under the action of $\text{SL}_2 (\mathbb{Z})$ (see Subsection \ref{Step5} below), where $M^{\nu}$ is a $V$-module from the list $\{M^1 ,\dots ,M^N \}$ discussed in the introduction. Before undertaking this, however, we accumulate some necessary results in Subsections \ref{Step1}--\ref{Step4}. Moreover, before considering $o(J)^{\ell_1}$ and $o(K)^{\ell_2}$, we instead consider arbitrary elements satisfying the the assumptions of Lemma \ref{LemWt1}.\\
 \indent For the entirety of Section \ref{SectionProofs}, we assume $v_1 ,\dots ,v_n$ satisfy the assumptions of Lemma \ref{LemWt1} and $\psi$ is an arbitrary grade-preserving endomorphism on the underlying vector space. Set 
 \begin{equation}
 \begin{aligned}
 &S^{\nu} (\psi ;z_1 ,\dots ,z_n ,\left\{ v,x\right\},\tau) \\
 &\quad :=\tr_{M^{\nu}} \psi Y \left( {q}_{z_1}^{o(\omega)} v_1 , {q}_{z_1} \right) \cdots Y \left({q}_{z_n}^{o(\omega)} v_n , {q}_{z_n} \right)Y \left({q}_{x}^{o(\omega)} v , {q}_{x} \right) q^{o(\widetilde{\omega})},
 \end{aligned} \label{npt}
 \end{equation}
 where again $q_w =e^{2\pi iw}$ for a variable $w$. Many of the upcoming subsections begin with new notation which will carry over to subsequent subsections. 
 
 
\subsection{Step one\label{Step1}}

 The first step is to express the $(n+1)$-point functions (\ref{npt}) as linear combinations of functions $P_k (\tau ,z)$ and VOA trace functions of $n$ zero modes and one vertex operator.\\
 \indent For an element $v_i \in \{v_1 ,\dots ,v_n\}$, set $\phi (v_i):=\phi (v_i,x-z_i,\tau) =\sum_{m_i \geq 1} P_{m_i}(x-z_i,\tau) v_i [m_i -1]$. Since $v_i [s]v_j [t] =v_j [t]v_i[s]$, we observe that $\phi (v_i)\phi (v_j) =\phi (v_j) \phi (v_i)$  for all $1\leq i,j \leq n$. For a set $U$, let $I(U)$ denote the set of all elements $\sigma\in \text{Sym}(U)$ such that $\sigma^2 =1$. Here $\text{Sym}(U)$ denotes the symmetric group with identity $1$ of the set $U$. In the case $U=\{ 1,\dots ,n\}$, we often write $I(n)$ in place of $I(\{1,\dots ,n\})$. For $\sigma \in \text{Sym}(U)$ set
 \begin{align*}
 m(\sigma) &:= \{i\in U \mid \sigma (i)\not =i \} \quad \text{and} \\
 f(\sigma) &:= \{i\in U \mid \sigma (i) =i \}.
 \end{align*}
 Finally, set $x_i :=x-z_i$ and $z_{i,j}:=z_i -z_j$.

\begin{proposition}\label{Prop1}
Suppose $v_1 ,\dots ,v_n$ satisfy the assumptions of Lemma \ref{LemWt1}, $v\in V$, and $[\psi ,v_j(m)]=0$ for each $1\leq j\leq n$ and $m\in \mathbb{Z}$. For $n\geq 1$ we have
 \begin{align}
 &S^{\nu} \left(\psi ;z_1 ,\dots ,z_n ,\left\{ v,x\right\},\tau \right)\notag \\
 &=\sum_{\sigma \in I(n)} \prod_{j<\sigma (j)} \langle v_j ,v_{\sigma (j)} \rangle P_2 \left( z_{\sigma (j),j} ,\tau \right) \sum_{U\subseteq f(\sigma)} S^{\nu} \left( \prod_{r\in U} o(v_r); \left\{ \left(\prod_{s\in f(\sigma)\setminus U} \phi (v_s)\right) v,x\right\} ,\tau \right). \notag
 \end{align}
\end{proposition}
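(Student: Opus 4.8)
I would argue by induction on $n$, treating $S^{\nu}(\cdots)$ and the $P_k(\cdot,\tau)$ throughout as formal series in $q=q_\tau$ (with coefficients finite sums of monomials in the $q_{z_i}^{\pm1}$ and $q_x^{\pm1}$), so that no convergence is invoked at this stage. The engine of the induction is a one-step recursion absorbing the single vertex operator $Y(q_{z_1}^{o(\omega)}v_1,q_{z_1})$ into the trace; this is a mild variant of Zhu's genus-one recursion \cite{Zhu} (and of the recursion underlying \cite{Miy-Theta}), extended to tolerate the auxiliary grade-preserving operator $\psi$ and the extra ``passive'' insertion $Y(q_x^{o(\omega)}v,q_x)$.

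\textbf{Step 1: the one-step recursion.} Set $\mathcal{W}:=Y(q_{z_2}^{o(\omega)}v_2,q_{z_2})\cdots Y(q_{z_n}^{o(\omega)}v_n,q_{z_n})Y(q_x^{o(\omega)}v,q_x)$, so that $S^{\nu}(\psi;z_1,\dots,z_n,\{v,x\},\tau)=\tr_{M^{\nu}}\psi\,Y(q_{z_1}^{o(\omega)}v_1,q_{z_1})\mathcal{W}\,q^{o(\widetilde{\omega})}$. Since $[\psi,v_j(m)]=0$ for all $j$ and $m$, the operator $\psi$ commutes with $Y(q_{z_1}^{o(\omega)}v_1,q_{z_1})$ and with every factor of $\mathcal{W}$ indexed by $v_2,\dots,v_n$, so Zhu's argument (trace cyclicity, the conjugation rule for $q^{o(\omega)}$, and the vertex-operator commutator formula, resumming the resulting geometric series into the $P_k$) applies and yields $\tr_{M^{\nu}}\psi\,o(v_1)\mathcal{W}\,q^{o(\widetilde{\omega})}$ together with, for each factor of $\mathcal{W}$, a contraction term against $v_1$; here $o(v_1)=v_1(0)$. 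For the factor $Y(q_{z_i}^{o(\omega)}v_i,q_{z_i})$ this contraction replaces $v_i$ by $\sum_{k\ge0}P_{k+1}(z_i-z_1,\tau)\,v_1[k]v_i$, which by Lemma~\ref{LemWt1}(3) equals $\langle v_1,v_i\rangle P_2(z_i-z_1,\tau)\textbf{1}$, so that whole factor disappears; for the passive factor $Y(q_x^{o(\omega)}v,q_x)$ it replaces $v$ by $\sum_{k\ge0}P_{k+1}(x-z_1,\tau)\,v_1[k]v=\phi(v_1)v$. Thus
\begin{align*}
S^{\nu}(\psi;z_1,\dots,z_n,\{v,x\},\tau)
&= S^{\nu}(\psi\,o(v_1);z_2,\dots,z_n,\{v,x\},\tau)\\
&\quad+\sum_{i=2}^{n}\langle v_1,v_i\rangle P_2(z_i-z_1,\tau)\,S^{\nu}(\psi;z_2,\dots,\widehat{z_i},\dots,z_n,\{v,x\},\tau)\\
&\quad+S^{\nu}(\psi;z_2,\dots,z_n,\{\phi(v_1)v,x\},\tau),
\end{align*}
where $\widehat{z_i}$ marks omission of that index (hence of the element $v_i$). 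For $n=1$ the middle sum is empty and the two remaining terms are exactly the $U=\{1\}$ and $U=\emptyset$ summands of the claimed identity for $\sigma=1$, $f(\sigma)=\{1\}$; this is the base case. Finally, Lemma~\ref{LemWt1} gives $v_1(0)v_i=0$, hence $[o(v_1),v_i(m)]=0$, so $o(v_1)$ commutes with each factor of $\mathcal{W}$ except the passive one and may be brought next to $\psi$ (justifying the first term above), and moreover $\psi\,o(v_1)$ still commutes with every $v_i(m)$, so the inductive hypothesis applies to all three terms (to the last by linearity in the finitely many homogeneous pieces of $\phi(v_1)v$, each of which lies in $V$).

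\textbf{Step 2: induction and bookkeeping.} Applying the inductive hypothesis (for $n-1$ weight-one elements) to the three terms, I would check they reassemble into $\sum_{\sigma\in I(n)}$, using that an involution $\sigma\in I(n)$ is determined by $\sigma(1)\in\{1,\dots,n\}$ together with its restriction to the complement of $\{1,\sigma(1)\}$. The term $S^{\nu}(\psi\,o(v_1);\dots)$, with $U=U'\cup\{1\}$ and $\psi\,o(v_1)\prod_{r\in U'}o(v_r)=\psi\prod_{r\in U}o(v_r)$, produces the summands with $\sigma(1)=1$ and $1\in U$; the term $S^{\nu}(\psi;\dots,\{\phi(v_1)v,x\},\dots)$, with $\bigl(\prod_{s\in f(\sigma')\setminus U'}\phi(v_s)\bigr)\phi(v_1)=\prod_{s\in f(\sigma)\setminus U}\phi(v_s)$ and $U=U'$, produces those with $\sigma(1)=1$ and $1\notin U$; and the $i$-th middle term produces those with $\sigma(1)=i$, its prefactor $\langle v_1,v_i\rangle P_2(z_i-z_1,\tau)$ being exactly the $\{1,i\}$-factor $\langle v_1,v_{\sigma(1)}\rangle P_2(z_{\sigma(1),1},\tau)$ in $\prod_{j<\sigma(j)}$ (since $1<i$), with $f(\sigma)$ and the $U$-sum inherited unchanged from the involution on $\{2,\dots,n\}\setminus\{i\}$. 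Summing over $\sigma(1)$ closes the induction.

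\textbf{Expected main obstacle.} The combinatorial reassembly of Step 2 is delicate but is essentially bookkeeping; the genuine content, and the only place a real vertex-algebra computation is required, is Step 1 --- pushing $Y(q_{z_1}^{o(\omega)}v_1,q_{z_1})$ past $\mathcal{W}$ and $q^{o(\widetilde{\omega})}$ and correctly resumming the commutator terms into the functions $P_k(\cdot,\tau)$, and checking that the passive insertion $Y(q_x^{o(\omega)}v,q_x)$ is merely dressed by $\phi(v_1)$ rather than contributing to the zero-mode term. As this is only a modest extension of the recursion already established in \cite{Zhu,Miy-Theta}, I would rely on those references for the formal handling of the geometric series involved.
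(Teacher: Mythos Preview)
Your proposal is correct and follows essentially the same route as the paper: establish the one-step recursion absorbing $Y(q_{z_1}^{o(\omega)}v_1,q_{z_1})$ (the paper carries out the mode-by-mode computation explicitly, citing Miyamoto's Proposition~4.1, and arrives at exactly your three-term identity), then iterate. The paper's proof simply ends with ``Repeating the steps gives the desired result,'' so your Step~2 bookkeeping---decomposing $\sigma\in I(n)$ according to $\sigma(1)$---is in fact more detailed than what the paper writes out; note also that the $\psi$ you correctly carry through the recursion is silently dropped from the right-hand side in the paper's displayed statement.
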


\begin{proof} 
 Throughout this proof we suppress the notation relying on the module $M^\nu$. That is, we write $S$ and $\tr$ instead of $S^{\nu}$ and $\tr_{M^{\nu}}$, respectively. Without loss of generality we may assume $v\in V_{\wt v}$. For $k\in \mathbb{Z}$, a similar calculation as in the proof of Proposition $4.1$ in \cite{Miy-Theta} gives
 \begin{align*}
 S (\psi v_1 (k) {q}_{z_1}^{-k} ; z_2 ,\dots ,z_n ,\left\{ v,x\right\}, \tau) 
 &=k{q}_{z_{j,1}}^{k} \sum_{j=2}^n \gby{v_1 ,v_j} S (\psi ;z_2 ,\dots ,\widehat{z_j}, \dots ,z_n, \left\{ v,x\right\},\tau) \\
&\hspace{5mm}+  {q}_{ x_1}^{k} \sum_{i \geq 0} \binom{k}{i} S  (\psi  ;z_2 ,\dots ,z_n, \{v_1 (i)v,x\} ,\tau) \notag\\
 &\hspace{5mm}+ q^k S  (\psi v_1 (k) {q}_{z_1}^{- k} ;z_2 ,\dots ,z_n, \left\{ v,x\right\},\tau)  ,
 \end{align*} 
 where $\widehat{X}$ denotes the omission of the term $X$. Then for $k\not =0$, using
 \[
 \sum_{i\geq 0} \binom{k}{i} v_1 (i) =\sum_{m \geq 0} \frac{(k +1 -\wt v_1)^m}{m!} v_1 [m],
 \]
 (which can be deduced from \eqref{bracket1} and \eqref{bracket2}) we have
 \begin{align*}
 S &(\psi v_1 (k) {q}_{z_1}^{- k} ;z_2 ,\dots ,z_n ,\left\{ v,x\right\},\tau)\\
 &= \sum_{j=2}^n \gby{v_j ,v_1} \frac{k {q}_{z_{j,1}}^{k}}{1- q^k} S (\psi ;z_2 ,\dots ,
 \widehat{z_{j}}, \dots ,z_n ,\left\{ v,x\right\},\tau) \notag \\
 &\hspace{5mm} + \sum_{m \geq 0} \frac{k^m}{m!} \frac{{q}_{ x_1}^{k}}{1- q^k}  S(\psi ;z_2 ,\dots ,z_n ,\{v_1 [m]v,x\}, \tau).
 \end{align*}
 Therefore, we find
 \begin{align*}
 &S (\psi ;z_1 ,z_2 ,\dots ,z_n ,\left\{ v,x\right\},\tau) \\
 &=S  (\psi v_1 (0); z_2 ,\dots ,z_n ,\left\{ v,x\right\},\tau) +\sum_{k\in \mathbb{Z}\setminus \{0\}}
 S (\psi v_1 (k) {q}_{z_1}^{- k} ;z_2 ,\dots ,z_n ,\left\{ v,x\right\},\tau) \\
 &= S  (\psi v_1 (0);z_2 ,\dots ,z_n ,\left\{ v,x\right\},\tau)+ \sum_{j=2}^n \gby{v_1 ,v_j} P_2 (z_{j,1}, \tau)
 S (\psi ;z_2 ,\dots ,\widehat{z_{j}},\dots ,z_n ,\left\{ v,x\right\},\tau) \notag \\
 &\hspace{5mm} + \sum_{m \geq 1} P_{m} (x_1 ,\tau) S (\psi ;z_2 ,\dots ,z_n ,\left\{v_1 [m-1]v,x\right\}, \tau).
 \end{align*}
 Repeating the steps gives the desired result.
 \end{proof}
 
Note that rearranging the order of the vertex operators in the previous theorem leads to a different, but similar result (see Lemma 8.5 in \cite{DLM-Orbifold}, for example).


\subsection{Step two\label{Step2}}
 
 The next step is to incorporate the action of $\text{SL}_2 (\mathbb{Z})$ into the the terms of Proposition \ref{Prop1}. We do this by utilizing Zhu's modularity theorem for $n$-point functions \cite{Zhu}. More precisely, we use the following $g=h=1$ case of Assertion $2$ in the proof of Theorem $4.10$ in \cite{Yamauchi-Primary}, which generalizes the $1$-point modularity result of Dong, Li, and Mason \cite{DLM-Orbifold} to $n$-point functions. When applied to our situation, this result states there are scalars $A_{\nu ,k}^\gamma$  for each $\gamma =\left(\begin{smallmatrix} a&b\\c&d \end{smallmatrix}\right) \in \text{SL}_2 (\mathbb{Z})$ such that for any $v\in V_{[\wt [v]]}$ we have
 \begin{equation}
 \begin{aligned}
 &S^{\nu} \left(\psi ;\frac{z_1}{c\tau +d} ,\dots ,\frac{z_n}{c\tau +d} ,\left\{v,\frac{x}{c\tau +d}\right\},\frac{a\tau +b}{c\tau +d} \right) \\
 &\hspace{15mm}=(c\tau +d)^{\wt [v]+n} \sum_{k=1}^N A_{\nu ,k}^\gamma  S^{k} \left(\psi ;z_1 ,\dots ,z_n ,\{v,x\},\tau ,z \right).
  \end{aligned} \label{ZhuTheorem}
 \end{equation}
 Throughout the remainder of Section \ref{SectionProofs}, for $\gamma = \left( \begin{smallmatrix} a&b \\c&d \end{smallmatrix} \right) \in \SL_2 (\mathbb{Z})$ and $\tau \in \mathbb{H}$ we set
 \[
 \gamma \tau := \frac{a\tau +b}{c\tau +d}, \qquad
 \hat{\gamma}^\tau := a\tau +b , \qquad \text{and} \qquad
 \check{\gamma}_\tau := c\tau +d.
 \]
 As we fix $\gamma$, we will typically write $A_k^\nu$ in place of $A_{\nu ,k}^\gamma$. \\
 \indent For a subset $U$ of a set $W$, let
 \begin{align}
 D_{\sigma} &:= \prod_{j<\sigma (j)} \langle v_j ,v_{\sigma (j)}\rangle {\check{\gamma}_{\tau}}^2  P_2 \left( z_{\sigma (j),j} ,\tau \right), \notag \\
 E_{\sigma}&:=\prod_{j<\sigma (j)} \langle v_j ,v_{\sigma (j)} \rangle \left({\check{\gamma}_{\tau}}^2 P_2 \left( z_{\sigma (j),j} ,\tau \right) -\frac{c{\check{\gamma}_{\tau }}}{2\pi i}\right), \notag \\
 F_{U}^{W} &:= \prod_{s\in W\setminus U} {\check{\gamma}_{\tau}} \sum_{m_s \geq 1} P_{m_1}(x_s,\tau)v_s[m_s-1], \notag \\
 G_{U}^{W}&:= \prod_{s\in W\setminus U} \sum_{m_s \geq 1} P_{m_s}\left(\frac{x_s}{\check{\gamma}_\tau}, \gamma \tau\right)v_s [m_s -1], \quad \text{and} \notag \\
 H_{U}^{W}&:= \prod_{s\in W\setminus U} \left(A_\gamma (x_s,\tau)v_s [0] -\frac{c}{2\pi i}v_s[1] + {\check{\gamma}_{\tau}}\sum_{m_s \geq 1} P_{m_s}\left(x_s, \tau \right)v_s [m_s -1]\right), \notag
 \end{align}
 where 
 \[
 A_\gamma (x_s,\tau) := \frac{\check{\gamma}_\tau}{2} -cx_s -\frac{1}{2}.
 \]
 For $j \geq 1$ and a nested set of subsets 
  \[
  U_\ell \subseteq f(\sigma_{\ell -1}) \subseteq U_{\ell -1} \subseteq \cdots f(\sigma_2)\subseteq U_2 \subseteq f(\sigma_1)\subseteq U_1 =U_0 =\{1,\dots ,n\},
  \]
   set $X_{U_j}^{\sigma_{j-1}} := X_{U_{j}}^{f(\sigma_{j-1})}$ for $X=F,G,H$. In this notation we also set $H_{U_{1}}^{\sigma_0} =G_{U_{1}}^{\sigma_0} =F_{U_{1}}^{\sigma_0}:=1$. By (\ref{commutation}), it follows that $F_{U_i}^{\sigma_{i-1}}F_{U_{j}}^{\sigma_{j-1}} =F_{U_j}^{\sigma_{j-1}} F_{U_i}^{\sigma_{i-1}}$, $G_{U_i}^{\sigma_{i-1}} G_{U_{j}}^{\sigma_{j-1}} =G_{U_j}^{\sigma_{j-1}} G_{U_i}^{\sigma_{i-1}}$, and $H_{U_i}^{\sigma_{i-1}} H_{U_{j}}^{\sigma_{j-1}} =H_{U_j}^{\sigma_{j-1}} H_{U_i}^{\sigma_{i-1}}$ for any $1\leq i,j \leq \ell$. Moreover, let $U_j^{j -1} := f(\sigma_{j -1})\setminus U_j$ denote the complement of $U_j$ in $f(\sigma_{j -1})$, and $\lvert U_j\rvert$ be the number of elements in $U_j$.
   
 We are now in position to establish the necessary lemmas.
 
\begin{lemma}\label{LemmaA}
 Let $t_1 ,\dots ,t_{\lvert U_\ell \rvert}$ denote the elements of $U_{\ell}$. Then for every $U_\ell$, $\ell \geq 1$, we have
 \begin{align}
 S^{\nu} &\left(1; \frac{z_{t_1}}{\check{\gamma}_{\tau}}, \dots ,\frac{z_{t_{\lvert U_\ell \rvert}}}{\check{\gamma}_{\tau}}, \left\{ G_{U_\ell}^{\sigma_{\ell -1}} G_{U_{\ell -1}}^{\sigma_{\ell -2}}\cdots G_{U_2}^{\sigma_{1}}v, \frac{x}{\check{\gamma}_{\tau}}\right\}, \gamma \tau \right) \notag \\
 &=\check{\gamma}_{\tau}^{\lvert U_\ell \rvert +\wt [v]} \sum_{k=1}^N A_{k}^{\nu} S^k \left(1; z_{t_1}, \dots , z_{t_{\lvert U_\ell \rvert}}, \left\{ H_{U_\ell}^{\sigma_{\ell -1}} H_{U_{\ell -1}}^{\sigma_{\ell -2}}\cdots H_{U_2}^{\sigma_{1}}v, x\right\}, \tau \right). \notag
 \end{align}
\end{lemma}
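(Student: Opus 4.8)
The plan is to apply Zhu's modularity formula \eqref{ZhuTheorem} to the left-hand side and then identify the resulting $P_k(x_s,\tau)$-expansions with the operators $H_U^\sigma$ appearing on the right-hand side. First I would observe that the argument of the trace on the left, namely $G_{U_\ell}^{\sigma_{\ell-1}}\cdots G_{U_2}^{\sigma_1}v$, is an element of $V$ expressed in terms of the bracket modes $v_s[m_s-1]$ with coefficients $P_{m_s}\!\left(\tfrac{x_s}{\check\gamma_\tau},\gamma\tau\right)$; since each such $G$-factor is itself a (convergent, grade-lowering) operator built from the $v_s$, the vector $G_{U_\ell}^{\sigma_{\ell-1}}\cdots G_{U_2}^{\sigma_1}v$ lies in a single $L[0]$-homogeneous component only after we track weights carefully — each application of $v_s[m_s-1]$ changes $\wt[\cdot]$ by $-(m_s-1)$ since $v_s\in V_{[1]}$ by Lemma \ref{LemWt1}(2). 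The key point is that $P_k$ carries a homogeneity in its $z$-variable matching exactly the weight shift, so after rescaling $z\mapsto z/\check\gamma_\tau$ and $\tau\mapsto\gamma\tau$ everything is consistent with the weight bookkeeping in \eqref{ZhuTheorem}.

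Next, the core computation: apply \eqref{ZhuTheorem} with $\psi=1$, $n$ replaced by $|U_\ell|$, and $v$ replaced by the vector $G_{U_\ell}^{\sigma_{\ell-1}}\cdots G_{U_2}^{\sigma_1}v$. This produces $\check\gamma_\tau^{\,|U_\ell|+\wt[\,\cdot\,]}\sum_k A_k^\nu S^k(1;z_{t_1},\dots,z_{t_{|U_\ell|}},\{\,\cdot\,,x\},\tau)$, but with the $P_k\!\left(\tfrac{x_s}{\check\gamma_\tau},\gamma\tau\right)$ coefficients still present inside the vector. The remaining work is to push the rescaling through these coefficients using the transformation laws for $P_k$ recorded in Subsection 2.1: for $k\geq 3$ we get $P_k(x_s/\check\gamma_\tau,\gamma\tau)=\check\gamma_\tau^{-k}P_k(x_s,\tau)$ — wait, more precisely $P_k(\gamma\tau, z/\check\gamma_\tau)=\check\gamma_\tau^k P_k(\tau,z)$, so $P_k(\tfrac{x_s}{\check\gamma_\tau},\gamma\tau)=\check\gamma_\tau^{k}P_k(x_s,\tau)$ in the paper's argument-order convention, while for $k=1$ one picks up the extra $A_\gamma(x_s,\tau)=\tfrac{\check\gamma_\tau}{2}-cx_s-\tfrac12$ term and for $k=2$ the extra $-\tfrac{c\check\gamma_\tau}{2\pi i}$ term. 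Substituting these, the factor $\check\gamma_\tau^{m_s-1}$ accompanying $v_s[m_s-1]$ combines with the weight exponent: $v_s[m_s-1]$ lowers $\wt[\cdot]$ by $m_s-1$, so $\check\gamma_\tau^{\wt[\cdot]}$ absorbs exactly $\check\gamma_\tau^{-(m_s-1)}$ per factor, leaving $\check\gamma_\tau^{|U_\ell|+\wt[v]}$ as claimed, and the coefficient of $v_s[m_s-1]$ becomes $\check\gamma_\tau P_{m_s}(x_s,\tau)$ for $m_s\geq 2$ plus the anomalous $k=1$ term $A_\gamma(x_s,\tau)v_s[0]$ and the $k=2$ correction $-\tfrac{c}{2\pi i}v_s[1]$ — which is precisely the definition of $H_U^\sigma$.

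I would organize the bookkeeping by first treating a single $G$-factor (i.e. the case $\ell=2$, so only $G_{U_2}^{\sigma_1}$ is present) to isolate the mechanism, then note that the commutativity relations $G_{U_i}^{\sigma_{i-1}}G_{U_j}^{\sigma_{j-1}}=G_{U_j}^{\sigma_{j-1}}G_{U_i}^{\sigma_{i-1}}$ and likewise for $H$ (recorded just before the lemma, via \eqref{commutation}) let the weight-exponent absorption be performed factor by factor without worrying about ordering. The main obstacle I anticipate is the weight accounting across nested $G$-factors: each $G$-factor is an infinite sum over the $m_s$, and one must verify that the single exponent $\check\gamma_\tau^{|U_\ell|+\wt[v]}$ emerges uniformly from \eqref{ZhuTheorem}'s $\check\gamma_\tau^{\wt[\text{(complicated vector)}]}$ term after the $P_k$-transformations are applied to \emph{every} $G$-factor simultaneously — i.e. confirming that the total $\wt[\cdot]$-shift of $G_{U_\ell}^{\sigma_{\ell-1}}\cdots G_{U_2}^{\sigma_1}v$ relative to $H_{U_\ell}^{\sigma_{\ell-1}}\cdots H_{U_2}^{\sigma_1}v$ is $-\sum(m_s-1)$ and that this is exactly cancelled. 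Convergence of the infinite sums is inherited from the hypothesis (v) and the convergence of Zhu's $n$-point functions, so I would not belabor it.
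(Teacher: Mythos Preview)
Your proposal is correct and follows essentially the same route as the paper's proof: expand the $G$-factors as sums over the $m_s$, track the $L[0]$-weight $\wt[v]+\sum_i|U_i^{i-1}|-\sum m_{s_i}$ of each monomial, apply the $n$-point modularity \eqref{ZhuTheorem} termwise, and use the $P_k$ transformation laws to convert each transformed $G$-factor into the corresponding $H$-factor while the extra powers $\check\gamma_\tau^{m_s-1}$ cancel against the weight exponent. The only cosmetic difference is that the paper performs the $P_k$ transformations before invoking \eqref{ZhuTheorem} (see \eqref{rando1}) whereas you invoke \eqref{ZhuTheorem} first and then transform; since the $P_k$ coefficients are scalars and the trace is linear, the two orders give the same computation.
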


\begin{proof}  
 We first note that 
 \begin{align}
 &S^{\nu} \left(1; \frac{z_{t_1}}{\check{\gamma}_{\tau}}, \dots ,\frac{z_{t_{\lvert U_\ell \rvert}}}{\check{\gamma}_{\tau}}, \left\{ G_{U_\ell}^{\sigma_{\ell -1}} G_{U_{\ell -1}}^{\sigma_{\ell -2}}\cdots G_{U_2}^{\sigma_{1}}v, \frac{x}{\check{\gamma}_{\tau}}\right\}, \gamma \tau \right) \notag \\
 &=S^{\nu} \Biggl (1; \frac{z_{t_1}}{\check{\gamma}_{\tau}}, \dots ,\frac{z_{t_{\lvert U_\ell \rvert}}}{\check{\gamma}_{\tau}}, \Biggl\{ \Biggl (\prod_{i=2}^{\ell} \prod_{s_{i} \in U_{i}^{i-1}} \sum_{m_{s_{i}} \geq 1} P_{m_{s_{i}}} \left(\frac{x_{s_{i}}}{{\check{\gamma}_{\tau}}}, \gamma \tau \right)v_{s_{i}}[m_{s_{i}}-1] \Biggl ) v, \frac{x}{\check{\gamma}_{\tau}} \Biggl \}, \gamma \tau \Biggl ) \notag \\
 &=S^{\nu} \Biggl (1; \frac{z_{t_1}}{\check{\gamma}_{\tau}}, \dots ,\frac{z_{t_{\lvert U_\ell \rvert}}}{\check{\gamma}_{\tau}}, \Biggl\{ \Biggl (\prod_{i=2}^{\ell} \prod_{s_{i} \in U_{i}^{i-1}} \Biggl (\left(\frac{{\check{\gamma}_{\tau}}}{2} - c(x-z_{s_i}) -\frac{1}{2}\right) v_{s_{i}}[0] \notag \\
 &\hspace{15mm}  -\frac{c{\check{\gamma}_{\tau}}}{2\pi i}v_{s_{i}}[1] + \sum_{m_{s_{i}} \geq 1} {\check{\gamma}_{\tau}}^{m_{s_{i}}} P_{m_{s_{i}}} (x_{s_{i}}, \tau )v_{s_{i}}[m_{s_{i}}-1] \Biggl ) v, \frac{x}{\check{\gamma}_{\tau}} \Biggl \}, \gamma \tau \Biggl ), \label{rando1}
 \end{align}
 where the index $i$ begins at $2$ since $G_{U_1}^{\sigma_{0}} =1$. Since the functions $S^k$ are linear and each component being summed is comprised of an element of weight (cf.\ 2.\ of Lemma \ref{LemWt1})
 \begin{align*}
 &\wt \left[ \prod_{i=2}^\ell \left(\prod_{s_{i} \in U_{i}^{i-1}} v_{s_{i}} [m_{s_{i}}-1] \right)v \right]
 =\wt [v] +\sum_{i=2}^\ell \left\lvert U_i^{i-1} \right\rvert -\sum_{i=2}^{\ell} \sum_{s_i \in U_i^{i-1}} m_{s_{i}},
 \end{align*}
 it follows from (\ref{ZhuTheorem}) that (\ref{rando1}) becomes
 \begin{align}
 {\check{\gamma}_{\tau}}^{\lvert U_\ell \rvert +\wt [v]} \sum_{k=1}^N A_{k}^{\nu} S^k \left(1; z_{t_1}, \dots , z_{t_{\lvert U_\ell \rvert}}, \left\{ H_{U_\ell}^{\sigma_{\ell -1}} H_{U_{\ell -1}}^{\sigma_{\ell -2}}\cdots H_{U_2}^{\sigma_{1}}v, x\right\}, \tau \right). \notag
 \end{align}
 This completes the proof.
\end{proof}

\begin{lemma}\label{LemmaB}
 For $U_{\ell -1}$ such that $\lvert U_{\ell -1} \rvert \geq 0$, we have
 \begin{align}
 &S^{\nu} \left( \prod_{r\in U_{\ell -1}} o(v_r) ; \left\{ G_{U_{\ell -1}}^{\sigma_{\ell -2}} \cdots G_{U_2}^{\sigma_{1}} v,\frac{x}{\check{\gamma}_{\tau}}\right\}, \gamma \tau \right) \notag \\
 &={\check{\gamma}_{\tau}}^{\wt [v]} \sum_{k=1}^N A_{k}^{\nu} \sum_{\sigma_{\ell -1} \in I(U_{\ell -1})} D_{\sigma_{\ell -1}} \sum_{U_\ell \subseteq f(\sigma_{\ell -1})} S^k \left( \prod_{r\in U_{\ell}}{\check{\gamma}_{\tau}}o(v_r); \left\{ F_{U_{\ell}}^{\sigma_{\ell -1}} H_{U_{\ell -1}}^{\sigma_{\ell -2}} \cdots H_{U_2}^{\sigma_{1}}v,x \right\} ,\tau \right) \notag \\
 &\hspace{5mm} -\sum_{\sigma_{\ell -1} \in I(U_{\ell -1})} E_{\sigma_{\ell -1}} \sum_{\substack{U_\ell \subseteq f(\sigma_{\ell -1}) \\ U_\ell \not = U_{\ell -1}}} S^{\nu} \left( \prod_{r\in U_{\ell}} o(v_r); \left\{G_{U_{\ell}}^{\sigma_{\ell -1}} G_{U_{\ell -1}}^{\sigma_{\ell -2}} \cdots G_{U_2}^{\sigma_{1}}v, \frac{x}{\check{\gamma}_{\tau}} \right\} , \gamma \tau \right). \notag
 \end{align}
\end{lemma}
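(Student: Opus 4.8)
The plan is to deduce Lemma~\ref{LemmaB} from Proposition~\ref{Prop1} and Lemma~\ref{LemmaA}, by recognizing the zero-mode trace on the left as one summand of the expansion that Proposition~\ref{Prop1} produces for a genuine $(\lvert U_{\ell-1}\rvert+1)$-point function. Write $w:=G_{U_{\ell-1}}^{\sigma_{\ell-2}}\cdots G_{U_2}^{\sigma_1}v$. First I would apply Proposition~\ref{Prop1} to $S^{\nu}\big(1;\frac{z_r}{\check\gamma_\tau}\ (r\in U_{\ell-1}),\{w,\frac{x}{\check\gamma_\tau}\},\gamma\tau\big)$, pulling out the operators $v_r$ for $r\in U_{\ell-1}$. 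This is legitimate because the nesting chain forces $U_{\ell-1}\subseteq U_j$ for $j\le\ell-1$, while each factor $G_{U_j}^{\sigma_{j-1}}$ occurring in $w$ involves only indices in $f(\sigma_{j-1})\setminus U_j$, which is disjoint from $U_j\supseteq U_{\ell-1}$; hence Proposition~\ref{Prop1} applies verbatim with $w$ in place of $v$ and $\psi=1$. Its $\phi$-insertions, built from $P_{m_s}\big(\frac{x}{\check\gamma_\tau}-\frac{z_s}{\check\gamma_\tau},\gamma\tau\big)v_s[m_s-1]=P_{m_s}\big(\frac{x_s}{\check\gamma_\tau},\gamma\tau\big)v_s[m_s-1]$, are exactly the factors of $G_{U_\ell}^{\sigma_{\ell-1}}$, and the scalar attached to an involution $\sigma_{\ell-1}$ equals $E_{\sigma_{\ell-1}}$ because $P_2\big(\frac{z}{\check\gamma_\tau},\gamma\tau\big)={\check\gamma_\tau}^{2}P_2(z,\tau)-\frac{c\check\gamma_\tau}{2\pi i}$. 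Thus
\[
S^{\nu}\!\left(1;\frac{z_r}{\check\gamma_\tau},\left\{w,\frac{x}{\check\gamma_\tau}\right\},\gamma\tau\right)
=\sum_{\sigma_{\ell-1}\in I(U_{\ell-1})}E_{\sigma_{\ell-1}}\sum_{U_\ell\subseteq f(\sigma_{\ell-1})}S^{\nu}\!\left(\prod_{r\in U_\ell}o(v_r);\left\{G_{U_\ell}^{\sigma_{\ell-1}}w,\frac{x}{\check\gamma_\tau}\right\},\gamma\tau\right).
\]

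In the summand with $\sigma_{\ell-1}=1$ and $U_\ell=U_{\ell-1}$ the scalar is an empty product and there is no $G_{U_\ell}^{\sigma_{\ell-1}}$ factor, so this summand coincides with the left-hand side of Lemma~\ref{LemmaB}; moreover, since $U_\ell\subseteq f(\sigma_{\ell-1})\subseteq U_{\ell-1}$, the equality $U_\ell=U_{\ell-1}$ holds precisely when $\sigma_{\ell-1}=1$ (and then $U_\ell=f(\sigma_{\ell-1})$). Isolating that summand, and using $G_{U_\ell}^{\sigma_{\ell-1}}w=G_{U_\ell}^{\sigma_{\ell-1}}G_{U_{\ell-1}}^{\sigma_{\ell-2}}\cdots G_{U_2}^{\sigma_1}v$, we find that the left-hand side of Lemma~\ref{LemmaB} equals $S^{\nu}\big(1;\frac{z_r}{\check\gamma_\tau}\ (r\in U_{\ell-1}),\{w,\frac{x}{\check\gamma_\tau}\},\gamma\tau\big)$ plus exactly the $E_{\sigma_{\ell-1}}$-summand written in the statement.

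It remains to identify the honest point function $S^{\nu}\big(1;\frac{z_r}{\check\gamma_\tau}\ (r\in U_{\ell-1}),\{w,\frac{x}{\check\gamma_\tau}\},\gamma\tau\big)$ with the $D_{\sigma_{\ell-1}}$-summand of Lemma~\ref{LemmaB}. For this I would apply Lemma~\ref{LemmaA} with $\ell$ replaced by $\ell-1$ (so $U_{\ell-1}$ plays the role of $U_\ell$ there), rewriting it as ${\check\gamma_\tau}^{\lvert U_{\ell-1}\rvert+\wt[v]}\sum_{k}A_k^{\nu}\,S^k\big(1;z_r\ (r\in U_{\ell-1}),\{H_{U_{\ell-1}}^{\sigma_{\ell-2}}\cdots H_{U_2}^{\sigma_1}v,x\},\tau\big)$, and then apply Proposition~\ref{Prop1} to each $S^k$, now in the unscaled variable $\tau$, pulling out the $v_r$ for $r\in U_{\ell-1}$. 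What is left is a count of powers of $\check\gamma_\tau$: from each of the $\lvert m(\sigma_{\ell-1})\rvert/2$ matched pairs one factors out $\check\gamma_\tau^{2}$, turning $\prod_{j<\sigma_{\ell-1}(j)}\langle v_j,v_{\sigma_{\ell-1}(j)}\rangle P_2(\cdot,\tau)$ into ${\check\gamma_\tau}^{-\lvert m(\sigma_{\ell-1})\rvert}D_{\sigma_{\ell-1}}$; from the product of $\phi$-insertions one factors out ${\check\gamma_\tau}^{\lvert f(\sigma_{\ell-1})\setminus U_\ell\rvert}$, turning it into ${\check\gamma_\tau}^{-\lvert f(\sigma_{\ell-1})\setminus U_\ell\rvert}F_{U_\ell}^{\sigma_{\ell-1}}$; and from $\prod_{r\in U_\ell}o(v_r)$ one factors out ${\check\gamma_\tau}^{\lvert U_\ell\rvert}$. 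Since $\lvert m(\sigma_{\ell-1})\rvert+\lvert f(\sigma_{\ell-1})\setminus U_\ell\rvert+\lvert U_\ell\rvert=\lvert U_{\ell-1}\rvert$, these cancel the prefactor ${\check\gamma_\tau}^{\lvert U_{\ell-1}\rvert}$, leaving exactly ${\check\gamma_\tau}^{\wt[v]}\sum_k A_k^{\nu}\sum_{\sigma_{\ell-1}}D_{\sigma_{\ell-1}}\sum_{U_\ell}S^k\big(\prod_{r\in U_\ell}\check\gamma_\tau o(v_r);\{F_{U_\ell}^{\sigma_{\ell-1}}H_{U_{\ell-1}}^{\sigma_{\ell-2}}\cdots v,x\},\tau\big)$, the main term of Lemma~\ref{LemmaB}. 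Adding the $E$-correction from the previous step completes the proof.

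The step I expect to be the main obstacle is the first one: checking carefully that the zero-mode trace on the left is literally the $(\sigma_{\ell-1},U_\ell)=(1,U_{\ell-1})$ summand of Proposition~\ref{Prop1} applied to the honest point function, so that the remaining summands reassemble into precisely the $E_{\sigma_{\ell-1}}$-correction as stated. This rests on the disjointness of $U_{\ell-1}$ from the index sets occurring in $w$ (so that Proposition~\ref{Prop1} applies with $w$ in place of $v$) and on the equivalence $U_\ell=U_{\ell-1}\Leftrightarrow\sigma_{\ell-1}=1$ under the nesting constraint. After that, the invocations of Lemma~\ref{LemmaA} and Proposition~\ref{Prop1}, and the count of $\check\gamma_\tau$-powers via the transformation laws of $P_1,P_2,P_k$ from Section~\ref{SectionBasics}, are routine.
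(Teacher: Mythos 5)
Your proposal is correct and follows essentially the same route as the paper: Proposition \ref{Prop1} applied once in the transformed frame and once in the original frame, with Lemma \ref{LemmaA} in between, isolating the $(\sigma_{\ell-1},U_\ell)=(1,U_{\ell-1})$ summand (which is the only one with $U_\ell=U_{\ell-1}$), and redistributing ${\check\gamma_\tau}^{\lvert U_{\ell-1}\rvert}$ via $\lvert m(\sigma_{\ell-1})\rvert+\lvert f(\sigma_{\ell-1})\setminus U_\ell\rvert+\lvert U_\ell\rvert=\lvert U_{\ell-1}\rvert$ to produce $D_{\sigma_{\ell-1}}$, $F_{U_\ell}^{\sigma_{\ell-1}}$, and $\check\gamma_\tau o(v_r)$. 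The only difference is presentational (you start from the honest point function and expand in both frames, while the paper chains the equalities in one display), so no gap remains.
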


\begin{proof} 
 Utilizing Proposition \ref{Prop1} twice, with a use of Lemma \ref{LemmaA} in between, we find
 \begin{align}
 &\sum_{\sigma_{\ell -1} \in I(U_{\ell -1})} \prod_{j<\sigma_{\ell -1} (j)} \langle v_j ,v_{\sigma_{\ell -1} (j)} \rangle P_2 \left(\frac{z_{\sigma_{\ell -1} (j),j}}{{\check{\gamma}_\tau}} ,\gamma \tau \right) \notag \\
 &\hspace{5mm} \times \sum_{U_\ell \subseteq f(\sigma_{\ell -1})} S^{\nu} \left(\prod_{r\in U_\ell} o(v_r); \left\{ \left( \prod_{s\in U_{\ell}^{\ell -1}} \phi \left(v_s ,\frac{x_s}{\check{\gamma}_{\tau}}, \gamma \tau \right) \right)G_{U_{\ell -1}}^{\sigma_{\ell -2}}\cdots G_{U_2}^{\sigma_{1}}v,\frac{x}{\check{\gamma}_{\tau}} \right\}, \gamma \tau \right) \notag \\
 &\hspace{10mm}= S^{\nu} \left(1; \frac{z_{t_1}}{\check{\gamma}_{\tau}}, \dots ,\frac{z_{t_{\lvert U_{\ell -1} \rvert}}}{\check{\gamma}_{\tau}}, \left\{G_{U_{\ell -1}}^{\sigma_{\ell -2}}\cdots G_{U_2}^{\sigma_{1}} v, \frac{x}{\check{\gamma}_{\tau}} \right\}, \gamma \tau \right) \notag \\
 &\hspace{10mm}={\check{\gamma}_{\tau}}^{\lvert U_{\ell -1} \rvert +\wt [v]} \sum_{k=1}^N A_{k}^{\nu} S^k \left(1; z_{t_1}, \dots ,z_{t_{\lvert U_{\ell -1} \rvert}}, \left\{H_{U_{\ell -1}}^{\sigma_{\ell -2}} \cdots H_{U_2}^{\sigma_{1}} v,x\right\} ,\tau \right) \notag \\
 &\hspace{10mm}={\check{\gamma}_{\tau}}^{\lvert U_{\ell -1} \rvert +\wt [v]}\sum_{k=1}^N A_{k}^{\nu} \sum_{\sigma_{\ell -1} \in I(U_{\ell -1})} \prod_{j<\sigma_{\ell -1} (j)} \langle v_j ,v_{\sigma_{\ell -1} (j)} \rangle P_2 \left( z_{\sigma_{\ell -1} (j),j} ,\tau \right) \notag \\
 &\hspace{20mm} \times \sum_{U_\ell\subseteq f(\sigma_{\ell -1})} S^k \left( \prod_{r\in U_\ell} o(v_r); \left\{ \left(\prod_{s\in U_{\ell}^{\ell -1}} \phi (v_s)\right) H_{U_{\ell -1}}^{\sigma_{\ell -2}}\cdots H_{U_2}^{\sigma_{1}} v,x\right\} ,\tau \right). \label{hmm1}
 \end{align}
 Isolating the piece associated to $\sigma_{\ell -1} =1$ and $U_\ell =U_{\ell -1}$ (so that $U_{\ell}^{\ell -1} =\emptyset$) in the left side of (\ref{hmm1}),  we have
 \begin{align}
 &S^{\nu} \left(\prod_{r \in U_{\ell -1}} o(v_r) ;\left\{G_{U_{\ell -1}}^{\sigma_{\ell -2}} \cdots G_{U_2}^{\sigma_{1}}v,\frac{x}{\check{\gamma}_{\tau}}\right\} ,\gamma \tau \right) \notag \\
 &= {\check{\gamma}_{\tau}}^{\vert U_{\ell -1} \rvert +\wt [v]}\sum_{k=1}^N A_{k}^{\nu} \sum_{\sigma_{\ell -1} \in I(U_{\ell -1})} \prod_{j<\sigma_{\ell -1} (j)} \langle v_j ,v_{\sigma_{\ell -1} (j)} \rangle P_2 \left( z_{\sigma_{\ell -1} (j),j} ,\tau \right) \notag \\
 &\hspace{5mm} \times \sum_{U_\ell \subseteq f(\sigma_{\ell -1})} S^k \left( \prod_{r\in U_\ell} o(v_r); \left\{ \left(\prod_{s\in U_{\ell}^{\ell -1}} \phi (v_s)\right) H_{U_{\ell -1}}^{\sigma_{\ell -2}} \cdots H_{U_2}^{\sigma_{1}} v,x\right\} ,\tau \right) \notag \\
 & \hspace{7mm} -\sum_{\sigma_{\ell -1} \in I(U_{\ell -1})} \prod_{j<\sigma_{\ell -1} (j)} \langle v_j ,v_{\sigma (j)} \rangle P_2 \left( \frac{z_{\sigma_{\ell -1} (j),j}}{{\check{\gamma}_\tau}} ,\gamma \tau \right) \notag \\
 &\hspace{5mm} \times \sum_{\substack{U_\ell \subseteq f(\sigma_{\ell -1}) \\ U_\ell \not = U_{\ell -1}}} 
 S^{\nu} \left( \prod_{r\in U_\ell} o(v_r); \left\{ \left(\prod_{s\in U_{\ell}^{\ell -1}} \phi \left(v_s ,\frac{x_s}{{\check{\gamma}_\tau}} ,\gamma \tau \right)\right) G_{U_{\ell -1}}^{\sigma_{\ell -2}} \cdots G_{U_2}^{\sigma_{1}} v,\frac{x}{\check{\gamma}_{\tau}}\right\} ,\gamma \tau \right) \notag \\
 &={\check{\gamma}_{\tau}}^{\wt [v]}\sum_{k=1}^N A_{k}^{\nu} \sum_{\sigma_{\ell -1} \in I(U_{\ell -1})} D_{\sigma_{\ell -1}} \sum_{U_\ell \subseteq f(\sigma_{\ell -1})} S^k \left( \prod_{r\in U_\ell} {\check {\gamma }_{\tau }}o(v_r); \left\{F_{U_\ell}^{\sigma_{\ell -1}} H_{U_{\ell -1}}^{\sigma_{\ell -2}} \cdots H_{U_2}^{\sigma_{1}}v,x\right\} ,\tau \right) \notag \\
 &\hspace{7mm} - \sum_{\sigma_{\ell -1} \in I(U_{\ell -1})} E_{\sigma_{\ell -1}} \sum_{\substack{U_\ell \subseteq f(\sigma_{\ell -1}) \\ U_\ell \not = U_{\ell -1}}} S^{\nu} \left( \prod_{r\in U_\ell} o(v_r) ; \left\{ G_{U_\ell}^{\sigma_{\ell -1}}G_{U_{\ell -1}}^{\sigma_{\ell -2}} \cdots G_{U_2}^{\sigma_{1}}v, \frac{x}{\check{\gamma}_{\tau}}\right\},\gamma \tau \right), \notag
 \end{align}
 where we used that $\lvert m(\sigma_{\ell -1})\rvert +\lvert U_\ell \rvert +\lvert U_{\ell}^{\ell -1} \rvert =\lvert U_{\ell -1} \rvert$ for each $\sigma_{\ell -1}$.
 \end{proof}
 
 Note that $E_{\sigma_{\ell -1}}$ and $D_{\sigma_{\ell -1}}$ are both $1$ when $\lvert U_{\ell -1} \rvert =0,1$.


\subsection{Step three\label{Step3}}
 
 The third step uses the symmetry of the equations developed in the previous steps to show that every term with a function $P_k$ vanishes, while terms only involving the modular anomalies of $P_1$ and $P_2$ remain. To help accomplish this, we make note of a result in \cite[Lemma $4.1$]{Miy-Theta}.

\begin{lemma}\label{LemmaC}
 Suppose $\lvert m(\sigma)\rvert =2p$. Then
 \[
 \sum_{\sigma_1 +\cdots +\sigma_t =\sigma} (-1)^t E_{\sigma_1} E_{\sigma_2} \cdots E_{\sigma_t} =(-1)^p E_{\sigma}.
 \]
 \hfill \qed
\end{lemma}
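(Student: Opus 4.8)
The plan is to prove Lemma~\ref{LemmaC} by induction on $p$, the number of $2$-cycles in $\sigma$, treating the sum over ordered factorizations $\sigma_1+\cdots+\sigma_t=\sigma$ as a sum over ordered set partitions of $m(\sigma)$ into blocks, each block carrying an involution supported on it. First I would unwind the notation: writing $\sigma=\sigma_1+\cdots+\sigma_t$ means the $\sigma_i$ are involutions with pairwise disjoint supports $m(\sigma_i)$ whose union is $m(\sigma)$, and since $E_\sigma$ is multiplicative over the $2$-cycle structure, $E_{\sigma_1}\cdots E_{\sigma_t}=\prod_{j<\sigma(j)}\langle v_j,v_{\sigma(j)}\rangle\bigl({\check{\gamma}_\tau}^2 P_2(z_{\sigma(j),j},\tau)-\tfrac{c\check{\gamma}_\tau}{2\pi i}\bigr)$, which depends only on $\sigma$, not on the factorization. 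Hence $E_{\sigma_1}\cdots E_{\sigma_t}=E_\sigma$ for every factorization, and the claim reduces to the purely combinatorial identity
\begin{equation}
\sum_{\sigma_1+\cdots+\sigma_t=\sigma}(-1)^t=(-1)^p, \notag
\end{equation}
where the sum is over all $t\geq 1$ and all ordered factorizations of $\sigma$ into nonidentity involutions with disjoint supports.

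Next I would recast this count. An ordered factorization into nonidentity pieces corresponds to an ordered partition $(B_1,\dots,B_t)$ of the $p$ transpositions of $\sigma$ into nonempty blocks, where $\sigma_i$ is the product of the transpositions in $B_i$; the number of such ordered partitions of a $p$-element set into $t$ nonempty blocks is the ordered Stirling number $t!\,S(p,t)$. So the identity becomes $\sum_{t=1}^{p}(-1)^t\,t!\,S(p,t)=(-1)^p$. This is a standard identity: $\sum_{t}(-1)^t t!\,S(p,t)$ is the value at $x=-1$ of the ``ordered Bell'' type generating function, and equals $(-1)^p$ for $p\geq 1$ (it is, up to sign, the number of surjections weighted by parity, and can be derived from $\sum_t t!\,S(p,t)x^t=\sum_t S(p,t)x(x-1)\cdots$ — more directly from the exponential generating function $\sum_p \bigl(\sum_t (-1)^t t! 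S(p,t)\bigr)\tfrac{y^p}{p!}=\tfrac{1}{1-(-1)(e^y-1)}=\tfrac{1}{2-e^y}$, whose coefficients one checks, or by the recursion below). I would present the clean inductive proof of $\sum_{t=1}^{p}(-1)^t t!\,S(p,t)=(-1)^p$: using $S(p,t)=S(p-1,t-1)+t\,S(p-1,t)$, one gets $\sum_t (-1)^t t! S(p,t)=-\sum_t(-1)^{t-1}(t-1)!\cdot t\,S(p-1,t-1)+\sum_t(-1)^t t!\,t\,S(p-1,t)$; reindexing and regrouping collapses this to $-\sum_s(-1)^s s!\,S(p-1,s)$, giving the sign flip, with base case $p=1$ (only $t=1$, contributing $-1$) confirming the formula.

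Alternatively, and perhaps more transparently for the reader, I would give a direct sign-reversing involution: since $E_\sigma\neq 0$ is harmless, pair up factorizations by merging the first block with the second (when $t\geq 2$ and this is ``allowed'') or splitting the first block; the only fixed point is the factorization with a single block $t=1$, which carries sign $(-1)^1$... but because $\sigma$ has $p$ transpositions the parity bookkeeping forces the residual sign $(-1)^p$, which is exactly what the Stirling computation encodes. I would pick whichever of these is cleanest and cite \cite{Miy-Theta} for the original appearance. \textbf{The main obstacle} I anticipate is purely bookkeeping: making the reduction ``$E_{\sigma_1}\cdots E_{\sigma_t}$ is independent of the factorization'' fully rigorous (it rests on the commutativity $v_i[s]v_j[t]=v_j[t]v_i[s]$ from Lemma~\ref{LemWt1}(4), so the scalar $E$ factors multiplicatively over disjoint-support involutions with no ordering subtleties) and then matching ``ordered factorizations of $\sigma$'' precisely with ``ordered set partitions of the $p$ transpositions'' so that the count $t!\,S(p,t)$ is unambiguous. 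Once that translation is pinned down, the remaining identity $\sum_{t=1}^p(-1)^t t!\,S(p,t)=(-1)^p$ is routine, and the lemma follows.
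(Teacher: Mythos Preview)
Your reduction is correct and your Stirling-number argument is valid. One small clean-up: each $E_\sigma$ is a \emph{scalar}---a product of complex numbers indexed by the $2$-cycles of $\sigma$---so the identity $E_{\sigma_1}\cdots E_{\sigma_t}=E_\sigma$ is immediate from the definition and does not depend on the operator commutativity of Lemma~\ref{LemWt1}(4); that remark can be dropped. Your inductive proof of $\sum_{t=1}^p(-1)^t\,t!\,S(p,t)=(-1)^p$ via the recursion $S(p,t)=S(p-1,t-1)+tS(p-1,t)$ goes through cleanly (and the generating-function check $\sum_{p\geq 0}\bigl(\sum_t(-1)^t t!S(p,t)\bigr)y^p/p!=\bigl(1+(e^y-1)\bigr)^{-1}=e^{-y}$ is even shorter). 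The sign-reversing-involution sketch, however, does not work as written: a single fixed point at $t=1$ would force the sum to equal $-1$, not $(-1)^p$, so that paragraph should be discarded in favor of the Stirling argument.

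As for comparison with the paper: there is nothing to compare. The paper does not prove this lemma; it is stated with a terminal \qed\ and attributed to \cite[Lemma~4.1]{Miy-Theta}. Your write-up supplies a self-contained argument where the paper simply quotes Miyamoto, which is a genuine addition rather than a divergence in method.
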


 We may now prove the following lemma.
 
\begin{lemma}\label{LemmaD}
 We have
 \begin{align*}
 &\sum_{\sigma \in I(n)}\sum_{\substack{ \sigma_1 ,\sigma_2 \in I(n) \\ \sigma_1 +\sigma_{2}=\sigma}} (-1)^{\frac{\lvert m(\sigma_1)\rvert}{2}}E_{\sigma_1} D_{\sigma_{2}} \sum_{U\subseteq W\subseteq f(\sigma)} (-1)^{\lvert f(\sigma) \setminus W\rvert} S^k \left( \prod_{r\in U} {\check {\gamma }_{\tau }}o(v_r); \left\{F_{U}^{W}H_{W}^{\sigma}v,x\right\} ,\tau \right) \notag \\
 &= \sum_{\sigma \in I(n)} \left( \prod_{j<\sigma (j)} \left( \frac{c{\check{\gamma}_{\tau}}\langle v_j ,v_{\sigma (j)} \rangle }{2\pi i} \right) \right)\sum_{U\subseteq f(\sigma)} S^k \left( \prod_{r\in U} {\check{\gamma}_{\tau}}o(v_r); \left\{ \left( \prod_{s\in f(\sigma) \setminus U} -B_s \right) v, x \right\} ,\tau \right), \notag
 \end{align*}
 where $B_s = B(v_s ,x_s) := A_\gamma (x_s ,\tau ) v_s [0] -\frac{c}{2\pi i} v_s [1]$.
\end{lemma}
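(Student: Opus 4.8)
\textbf{Proof plan for Lemma \ref{LemmaD}.}

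The strategy is to massage the left-hand side by separating the $P_k$-data inside $F_U^W$ and $H_W^\sigma$ from the $P_2$- and $P_1$-anomaly data sitting in $D_{\sigma_2}$, $E_{\sigma_1}$, and the definition of $B_s$. Recall that $H_W^\sigma$ is built from the three pieces $A_\gamma(x_s,\tau)v_s[0]$, $-\tfrac{c}{2\pi i}v_s[1]$, and the ``genuine'' tail $\check\gamma_\tau\sum_{m_s\ge1}P_{m_s}(x_s,\tau)v_s[m_s-1]$, the first two of which together form exactly $B_s$. The first step is therefore to expand every $H_W^\sigma$-factor into its $B_s$-part plus its $P$-tail part, and simultaneously to recognize $F_U^W$ as a pure $P$-tail object (it carries only $\sum_{m_s\ge1}P_{m_s}(x_s,\tau)v_s[m_s-1]$ terms). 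Collecting these, the inner double sum $\sum_{U\subseteq W\subseteq f(\sigma)}(-1)^{|f(\sigma)\setminus W|}S^k(\dots F_U^W H_W^\sigma v\dots)$ organizes, for each $s\in f(\sigma)$, into a factor that is either ``$B_s$'' (when $s$ is assigned to the $B$-part of $H$), or ``a $P$-tail of $v_s$ appearing in $F$ with a $(-1)$ from $W\not\ni s$'' versus ``a $P$-tail of $v_s$ appearing in $H$ with a $+1$ from $W\ni s$.'' Since these last two options carry the \emph{same} $P$-tail operator $\check\gamma_\tau\sum_{m_s\ge1}P_{m_s}(x_s,\tau)v_s[m_s-1]$ but opposite signs, they cancel in pairs. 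Hence all $P_k$-tails disappear and only the $\prod_{s\in f(\sigma)\setminus U}(-B_s)$ survives — this is the combinatorial heart of the argument, and the place where I expect the bookkeeping to be delicate: one must be careful that the sign $(-1)^{|f(\sigma)\setminus W|}$ and the commutativity of the $F$- and $H$-factors (guaranteed by \eqref{commutation}, i.e.\ part 4 of Lemma \ref{LemWt1}) let the cancellation be performed factor by factor over $s$.

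With the $P$-tails gone, what remains on the left is
$\sum_{\sigma\in I(n)}\Big(\sum_{\sigma_1+\sigma_2=\sigma}(-1)^{|m(\sigma_1)|/2}E_{\sigma_1}D_{\sigma_2}\Big)\sum_{U\subseteq f(\sigma)}S^k\big(\prod_{r\in U}\check\gamma_\tau o(v_r);\{(\prod_{s\in f(\sigma)\setminus U}-B_s)v,x\},\tau\big)$,
so it suffices to identify the scalar coefficient $\sum_{\sigma_1+\sigma_2=\sigma}(-1)^{|m(\sigma_1)|/2}E_{\sigma_1}D_{\sigma_2}$ with $\prod_{j<\sigma(j)}\big(\tfrac{c\check\gamma_\tau\langle v_j,v_{\sigma(j)}\rangle}{2\pi i}\big)$. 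The second step is thus a purely scalar computation. I would write $E_{\sigma_1}=\prod_{j<\sigma_1(j)}\langle v_j,v_{\sigma_1(j)}\rangle\big(\check\gamma_\tau^2P_2(z_{\sigma_1(j),j},\tau)-\tfrac{c\check\gamma_\tau}{2\pi i}\big)$ and $D_{\sigma_2}=\prod_{j<\sigma_2(j)}\langle v_j,v_{\sigma_2(j)}\rangle\check\gamma_\tau^2P_2(z_{\sigma_2(j),j},\tau)$, and note that a decomposition $\sigma=\sigma_1+\sigma_2$ (into fixed-point-free involutions on disjoint subsets, with $m(\sigma)=m(\sigma_1)\sqcup m(\sigma_2)$) is exactly a splitting of the matching induced by $\sigma$ into two sub-matchings. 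So the sum factors over the transpositions $(j,\sigma(j))$ of $\sigma$: each such pair contributes either to $\sigma_2$ (weight $\langle v_j,v_{\sigma(j)}\rangle\check\gamma_\tau^2P_2$) or to $\sigma_1$ (weight $(-1)^{1}\langle v_j,v_{\sigma(j)}\rangle(\check\gamma_\tau^2P_2-\tfrac{c\check\gamma_\tau}{2\pi i})$, the $(-1)$ coming from the $(-1)^{|m(\sigma_1)|/2}$ being multiplicative, one factor $-1$ per transposition in $\sigma_1$). Summing the two options per pair gives $\langle v_j,v_{\sigma(j)}\rangle\big(\check\gamma_\tau^2P_2-(\check\gamma_\tau^2P_2-\tfrac{c\check\gamma_\tau}{2\pi i})\big)=\langle v_j,v_{\sigma(j)}\rangle\tfrac{c\check\gamma_\tau}{2\pi i}$; taking the product over all transpositions of $\sigma$ yields precisely $\prod_{j<\sigma(j)}\tfrac{c\check\gamma_\tau\langle v_j,v_{\sigma(j)}\rangle}{2\pi i}$, exactly as claimed. (One can view this as the $E\mapsto D$ specialization of Lemma \ref{LemmaC}, with $-\tfrac{c\check\gamma_\tau}{2\pi i}$ playing the role of the ``anomaly'' and $\check\gamma_\tau^2P_2$ the ``genuine'' part.)

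Combining the two steps gives the stated identity. The main obstacle, as indicated, is the first step: making the pairwise cancellation of the $P_k$-tails rigorous requires setting up clean notation for the ``which of $F$, $B$-part of $H$, or $P$-tail of $H$ does index $s$ go to'' trichotomy, checking that commutativity of the operators $v_s[m_s-1]$ lets one reorganize the product $F_U^W H_W^\sigma$ by the index $s$ rather than by whether the factor came from $F$ or from $H$, and verifying that the alternating sign $(-1)^{|f(\sigma)\setminus W|}$ distributes as $\prod_{s\in f(\sigma)\setminus U}(\pm1)$ so the telescoping is genuinely factor-by-factor over $s\in f(\sigma)$. Once that is in place the rest is the short scalar computation above.
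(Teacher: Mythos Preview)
Your proposal is correct and follows essentially the same two-part strategy as the paper: reduce the scalar prefactor $\sum_{\sigma_1+\sigma_2=\sigma}(-1)^{|m(\sigma_1)|/2}E_{\sigma_1}D_{\sigma_2}$ to $\prod_{j<\sigma(j)}\frac{c\check\gamma_\tau\langle v_j,v_{\sigma(j)}\rangle}{2\pi i}$, and show that the alternating sum over $W$ of $F_U^W H_W^\sigma$ collapses to $\prod_{s\in f(\sigma)\setminus U}(-B_s)$. The paper does the operator cancellation by expanding $H$ into its $B$- and $\hat\phi$-pieces, reindexing by $X_1\sqcup X_2=f(\sigma)\setminus U$, and then summing a binomial over the admissible $W$; your per-index factorization is the same argument in cleaner packaging, and your transposition-by-transposition scalar step is likewise a tidier version of the paper's three-way split $\sigma=\sigma_2+\sigma_3+\sigma_4$ followed by a binomial vanishing.

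One small slip to fix when you write it up: you have the signs swapped in your description of the trichotomy. If $s$ contributes via $F_U^W$ then $s\in W\setminus U\subseteq W$, so the sign from $(-1)^{|f(\sigma)\setminus W|}$ is $+1$; if $s$ contributes via $H_W^\sigma$ then $s\in f(\sigma)\setminus W$, so the sign is $-1$. The cancellation of the $P$-tails and the survival of $-B_s$ per index are unaffected, since the two options still carry opposite signs and the $B_s$-option lives on the $H$-side (hence picks up the minus).
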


\begin{proof}
By linearity, we can break the proof into two parts.
Set $Q_{\sigma} := \prod_{j <\sigma (j)} \left(-\frac{c{\check{\gamma}_{\tau}}}{2\pi i} \right)$ and $R_{\sigma} := \prod_{j <\sigma (j)} \check{\gamma}_{\tau} P_2 \left( z_{\sigma (j),j} ,\tau \right)$. For $\sigma ,\sigma_1 ,\sigma_2 \in I(n)$ such that $\sigma_1 +\sigma_2 =\sigma$, we may consider decompositions of $\sigma_1$ as $\sigma_3 +\sigma_4 =\sigma_1$ with $\sigma_3 ,\sigma_4 \in I(n)$, so that
\begin{align}
 &\sum_{\substack{\sigma_1 ,\sigma_2 \in I(n) \\\sigma_1 +\sigma_2 =\sigma}} (-1)^{\frac{\lvert m(\sigma_1)\rvert}{2}} E_{\sigma_1} D_{\sigma_2} = \sum_{\sigma_2 +\sigma_3 +\sigma_4 =\sigma} (-1)^{\frac{\lvert m(\sigma_3)\rvert}{2}} (-1)^{\frac{\lvert m(\sigma_4) \rvert}{2}} Q_{\sigma_3} R_{\sigma_4} R_{\sigma_2} \prod_{j<\sigma (j)} \langle v_j ,v_{\sigma (j)} \rangle \notag \\
 &\hspace{15mm}= \sum_{\sigma_3 +\sigma' =\sigma} (-1)^{\frac{\lvert m(\sigma_3)\rvert}{2}} Q_{\sigma_3} \prod_{j<\sigma (j)} \langle v_j ,v_{\sigma (j)} \rangle \sum_{\sigma_2 +\sigma_4 =\sigma'} (-1)^{\frac{\lvert m(\sigma_4) \rvert}{2}} R_{\sigma_2 +\sigma_4}. \label{quack1}
\end{align}
 Suppose $m(\sigma') \not = \emptyset$. For $\sigma' = (s_1 ,s_2)\cdots (s_{2\ell -1} ,s_{2\ell})$, we have there are $\binom{\ell}{r}$ possible many $\sigma_4$ with $\sigma_2 +\sigma_4 =\sigma'$ such that $\lvert m(\sigma_4) \rvert =2r$. For such $\sigma'$, we have
 \[
 \sum_{\sigma_2 +\sigma_4 =\sigma'} (-1)^{\frac{\lvert m(\sigma_4) \rvert}{2}} R_{\sigma_2 +\sigma_4} = \sum_{r=0}^\ell (-1)^r \binom{\ell}{r} R_{\sigma'} =0
 \]
 since $\sum_{r=0}^\ell (-1)^r \binom{\ell}{r} =0$ for $\ell >0$. In the case $m(\sigma')=\emptyset$, continuing the calculation in \eqref{quack1} gives
 \begin{align*}
 &\sum_{\substack{\sigma_1 ,\sigma_2 \in I(n) \\\sigma_1 +\sigma_2 =\sigma}} (-1)^{\frac{\lvert m(\sigma_1)\rvert}{2}} E_{\sigma_1} D_{\sigma_2}
 = \sum_{\sigma \in I(n)} (-1)^{\lvert m(\sigma)\rvert /2} \prod_{j <\sigma (j)} \left(-\frac{c{\check{\gamma}_{\tau}}}{2\pi i} \right) \prod_{j<\sigma (j)} \langle v_j ,v_{\sigma (j)} \rangle \notag \\
 &\hspace{15mm}= \sum_{\sigma \in I(n)} (-1)^{\lvert m(\sigma)\rvert } \prod_{j <\sigma (j)} \left(\frac{c{\check{\gamma}_{\tau}}\langle v_j ,v_{\sigma (j)} \rangle}{2\pi i} \right)= \sum_{\sigma \in I(n)} \prod_{j <\sigma (j)} \left(\frac{c{\check{\gamma}_{\tau}}\langle v_j ,v_{\sigma (j)} \rangle}{2\pi i} \right). 
 \end{align*}
 This proves the first part.\\
 \indent For the second part, set $\widehat{\phi} \left( v_s\right):= {\check{\gamma}_{\tau}} \phi \left( v_s ,x_s ,\tau \right)$. Fix $\sigma$ and consider
 \[
 \sum_{U\subseteq W \subseteq f(\sigma)} (-1)^{\lvert f(\sigma)\setminus W \rvert} F_U^W H_W^{\sigma}.
 \] 
 Note that $\left(f(\sigma)\setminus W\right) \sqcup \left(W\setminus U\right) =f(\sigma)\setminus U$. Then
 \begin{align}
 &\sum_{U\subseteq W \subseteq f(\sigma)} (-1)^{\lvert f(\sigma)\setminus W \rvert} F_U^W H_W^{\sigma} =\sum_{U\subseteq W \subseteq f(\sigma)} (-1)^{\lvert f(\sigma)\setminus W \rvert} \prod_{r\in W\setminus U} \widehat{\phi}\left(v_r \right) \prod_{s\in f(\sigma)\setminus W} \left(B_s +\widehat{\phi}\left(v_s \right)\right) \notag \\
 &\hspace{15mm}=\sum_{U\subseteq W \subseteq f(\sigma)} (-1)^{\lvert f(\sigma)\setminus W \rvert}\prod_{r\in W\setminus U} \widehat{\phi}\left(v_r \right) \sum_{\substack{X_1 ,X_3 \subseteq f(\sigma)\setminus W \\ X_1 \sqcup X_3 =f(\sigma)\setminus W}} \prod_{s_1 \in X_1} B_{s_1} \prod_{s_2 \in X_3} \widehat{\phi} \left( v_{s_2}\right) \notag \\
 &\hspace{15mm}=\sum_{U\subseteq W \subseteq f(\sigma)} (-1)^{\lvert f(\sigma)\setminus W \rvert} \sum_{\substack{X_1 ,X_3 \subseteq f(\sigma)\setminus W \\ X_1 \sqcup X_3 =f(\sigma)\setminus W}} \prod_{s \in X_1} B_{s} \prod_{r \in X_3\sqcup W\setminus U} \widehat{\phi} \left( v_{r}\right) \notag \\
 &\hspace{15mm}=\sum_{U\subseteq W \subseteq f(\sigma)} (-1)^{\lvert f(\sigma)\setminus W \rvert} \sum_{\substack{X_1 \subseteq f(\sigma) \setminus W ,X_2 \subseteq f(\sigma)\setminus U \\ X_1 \sqcup X_2 =f(\sigma)\setminus U}} \prod_{s \in X_1} B_{s} \prod_{r \in X_2} \widehat{\phi} \left( v_{r}\right). \label{tort1}
 \end{align}
 For fixed $X_1 \subseteq f(\sigma)\setminus W \subseteq f(\sigma) \setminus U$, the product $\prod_{r \in X_2} \widehat{\phi} \left( v_{r}\right)$ is the same regardless of the $W$, so long as $U\subseteq W$ and $X_1 \subseteq f(\sigma) \setminus W$. Therefore, we are interested in counting how many ways we can choose $W$ so that $U\subseteq W$ and $X_1 \subseteq f(\sigma) \setminus W$. Since there are $\lvert f(\sigma)\rvert - \lvert X_1 \rvert -\lvert U \rvert =\lvert f(\sigma) \setminus U\rvert - \lvert X_1 \rvert$ many elements $W$ may contain which are not in $U$, there are $\binom{\lvert f(\sigma) \setminus U\rvert - \lvert X_1 \rvert}{j}$ many ways to choose $W$ so that $\lvert W \rvert = \lvert U \rvert +j$. Then $(-1)^{\lvert f(\sigma)\setminus W\rvert} = (-1)^{\lvert f(\sigma) \setminus U\rvert} (-1)^j$, and for the fixed $X_1$ we find the total number of $\prod_{s \in X_1} B_{s} \prod_{r \in X_2} \widehat{\phi} \left( v_{r}\right)$ terms in (\ref{tort1}) is
 \begin{align*}
 &(-1)^{\lvert f(\sigma)\setminus U \rvert} \sum_{j=0}^{\lvert f(\sigma)\setminus U\rvert -\lvert X_1 \rvert} (-1)^j \binom{\lvert f(\sigma)\setminus U\rvert -\lvert X_1 \rvert}{j} .
 \end{align*}
 However, this sum equals $0$ so long as $\lvert f(\sigma)\setminus U \rvert -\lvert X_1 \rvert >0$. In the case $\lvert f(\sigma)\setminus U \rvert -\lvert X_1 \rvert =0$, we have $X_1 = f(\sigma)\setminus U$, and continuing the calculation in (\ref{tort1}) we find
\begin{align}
 &\sum_{U\subseteq W \subseteq f(\sigma)} (-1)^{\lvert f(\sigma)\setminus W \rvert} F_U^W H_W^{\sigma} = \sum_{U\subseteq W \subseteq f(\sigma)} (-1)^{\lvert f(\sigma)\setminus W \rvert} \sum_{\substack{X_1 \subseteq f(\sigma) \setminus W ,X_2 \subseteq f(\sigma)\setminus U \\ X_1 \sqcup X_2 =f(\sigma)\setminus U}} \prod_{s \in X_1} B_{s} \prod_{r \in X_2} \widehat{\phi} \left( v_{r}\right) \notag \\
 &\hspace{15mm}=\sum_{U\subseteq f(\sigma)} (-1)^{\lvert f(\sigma)\setminus U \rvert} \prod_{s\in f(\sigma)\setminus U} B_s 
 = \sum_{U\subseteq f(\sigma)}\prod_{s\in f(\sigma)\setminus U} \left( -B_s \right). \notag
 \end{align}
 This completes the proof.
 \end{proof} 
 
 
\subsection{Step four\label{Step4}}

 Here we combine the previous steps to obtain the modular transformation properties for the functions of an isolated product of $n$ zero modes.
 
\begin{proposition} \label{HardThm1}
 With the notation and assumptions above, along with requiring $v_{s}(0)v=0$ for all $1\leq s\leq n$, we have
 \begin{align}
 S^{\nu} \left(\prod_{r=1}^n o(v_r) ;\left\{v,\frac{x}{\check{\gamma}_{\tau}}\right\} ,\gamma \tau\right) =&{\check{\gamma}_{\tau}}^{\wt [v]}\sum_{k=1}^N A_{k}^{\nu} \sum_{\sigma \in I(n)} \left(\prod_{j<\sigma (j)} \left(\frac{c{\check{\gamma}_{\tau}}\langle v_j ,v_{\sigma (j)}\rangle}{2\pi i} \right)\right)\notag \\
 &\times\sum_{U\subseteq f(\sigma)} S^k \left( \prod_{r\in U} {\check{\gamma}_{\tau}} o(v_r) ; \left\{ \left(\prod_{s\in f(\sigma)\setminus U} \frac{c}{2\pi i}v_s [1]\right) v,x\right\}, \tau \right). \notag
 \end{align}
\end{proposition}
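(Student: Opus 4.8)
The plan is to prove Proposition~\ref{HardThm1} by iterating Lemma~\ref{LemmaB} down the nested chain of subsets $U_1 = \{1,\dots,n\} \supseteq f(\sigma_1) \supseteq U_2 \supseteq f(\sigma_2) \supseteq \cdots$, collecting the error terms at each stage, and then invoking Lemma~\ref{LemmaD} to see that the entire tower of $P_k$-contributions telescopes to the single anomaly term on the right-hand side. Concretely, I would start from the left side $S^\nu\!\left(\prod_{r=1}^n o(v_r);\{v,x/\check\gamma_\tau\},\gamma\tau\right)$, which is the $\ell=1$ case of the left side of Lemma~\ref{LemmaB} with $U_0 = U_1 = \{1,\dots,n\}$ and all the $G$-factors equal to $1$. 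Applying Lemma~\ref{LemmaB} once rewrites this as a sum over $\sigma_1 \in I(n)$ of $D_{\sigma_1}$-weighted ``main'' terms $S^k(\prod_{r\in U_2}\check\gamma_\tau o(v_r);\{F_{U_2}^{\sigma_1} v, x\},\tau)$, minus $E_{\sigma_1}$-weighted ``error'' terms which again have the shape of the left side of Lemma~\ref{LemmaB} but with $U_1$ replaced by the strictly smaller $U_2 \subsetneq U_1$ (and one more $G$-factor).

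Next I would set up the induction carefully: define, for each nested chain, the quantity obtained after $\ell-1$ applications of Lemma~\ref{LemmaB}, and show by downward induction on $|U_{\ell-1}|$ that it equals $\check\gamma_\tau^{\wt[v]}\sum_k A_k^\nu$ times a sum over all $\sigma \in I(U_{\ell-1})$ and all decompositions $\sigma = \sigma_1 + \cdots + \sigma_t$ of the appropriate signed products of $D$'s and $E$'s, times the $S^k$-terms with the $F$ and $H$ operators inserted. The base case $|U_{\ell-1}| \le 1$ is immediate since then $D_{\sigma} = E_{\sigma} = 1$ (as noted right after Lemma~\ref{LemmaB}) and $I(U_{\ell-1})$ is trivial, so the recursion bottoms out. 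Because the $F$, $G$, $H$ operators all commute with one another by~\eqref{commutation} (this is exactly why the commutativity statements were recorded in Step two), the repeated nesting can be reorganized so that the sum over all intermediate $\sigma_i$'s and $U_i$'s becomes a sum over a single $\sigma \in I(n)$ together with a decomposition $\sigma_1 + \sigma_2 = \sigma$ and a flag $U \subseteq W \subseteq f(\sigma)$ — precisely the index set appearing on the left side of Lemma~\ref{LemmaD}. The sign bookkeeping, namely that the accumulated coefficient of a given $\sigma = \sigma_1 + \sigma_2$ term is $(-1)^{|m(\sigma_1)|/2} E_{\sigma_1} D_{\sigma_2}$, follows from Lemma~\ref{LemmaC} applied to reassemble the chains of $E$'s that arise from iterating the error term.

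With the left side thus rewritten in the exact form of the left side of Lemma~\ref{LemmaD}, I would apply that lemma verbatim to convert it into $\check\gamma_\tau^{\wt[v]}\sum_k A_k^\nu \sum_{\sigma\in I(n)}\left(\prod_{j<\sigma(j)}\frac{c\check\gamma_\tau\langle v_j,v_{\sigma(j)}\rangle}{2\pi i}\right)\sum_{U\subseteq f(\sigma)} S^k(\prod_{r\in U}\check\gamma_\tau o(v_r);\{(\prod_{s\in f(\sigma)\setminus U}(-B_s))v,x\},\tau)$. The final reduction is to replace $-B_s = -A_\gamma(x_s,\tau)v_s[0] + \frac{c}{2\pi i}v_s[1]$ by $\frac{c}{2\pi i}v_s[1]$: this uses the hypothesis $v_s(0)v = 0$, hence $v_s[0]v = 0$ by part~1 of Lemma~\ref{LemWt1}, together with parts~3 and~4 of Lemma~\ref{LemWt1} (and the commutativity of the $v_s[k]$ among themselves and with the relevant operators) to see that any occurrence of a $v_s[0]$ factor annihilates the argument $v$ after the commuting factors are pushed through — so every term containing an $A_\gamma(x_s,\tau)v_s[0]$ vanishes and only the $v_s[1]$ pieces survive.

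I expect the main obstacle to be the combinatorial reorganization in the induction: tracking how the nested subset flags $U_\ell \subseteq f(\sigma_{\ell-1}) \subseteq \cdots$ and the associated $D$/$E$ coefficients collapse, after using commutativity, into the single-$\sigma$ indexing with a two-term decomposition $\sigma_1 + \sigma_2 = \sigma$ and the double flag $U \subseteq W \subseteq f(\sigma)$, with all signs $(-1)^{|f(\sigma)\setminus W|}$ and $(-1)^{|m(\sigma_1)|/2}$ landing in exactly the places Lemma~\ref{LemmaD} demands. Everything else — the modular input via~\eqref{ZhuTheorem} (already packaged in Lemma~\ref{LemmaA}), the vanishing of the genuine $P_k$ terms (Lemma~\ref{LemmaD} first part), and the final use of $v_s(0)v=0$ — is then essentially bookkeeping once the indexing is set up correctly.
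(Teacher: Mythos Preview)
Your plan is correct and follows essentially the same route as the paper: iterate Lemma~\ref{LemmaB} on the error terms until the chain terminates, reorganize the resulting nested sums over $(\sigma_1,\dots,\sigma_{p},U_2,\dots,U_{p+1})$ into a sum over a single $\sigma\in I(n)$ with $\sigma_1+\sigma_2=\sigma$ and $U\subseteq W\subseteq f(\sigma)$ using Lemma~\ref{LemmaC} and commutativity, then apply Lemma~\ref{LemmaD} and kill the $v_s[0]$ pieces via the hypothesis. The one technical point you will need when carrying out the ``main obstacle'' you flagged is a binomial counting argument: for fixed $\sigma$ and $W$, the number of chains $U_2\subsetneq\cdots$ of length $j$ that collapse to the same $H_W^{\sigma}$ is $\binom{|f(\sigma)\setminus W|}{j}$, and the identity $\sum_{j\ge 1}(-1)^j\binom{N}{j}=-(-1)^N$ is what produces the sign $(-1)^{|f(\sigma)\setminus W|}$ required by Lemma~\ref{LemmaD}.
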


\begin{proof}
Using Lemma \ref{LemmaB} applied to $U_1 =\{1,\dots ,n\}$, we find
 \begin{align}
 &S^{\nu} \left(\prod_{r=1}^n o(v_r) ;\left\{v,\frac{x}{\check{\gamma}_{\tau}}\right\} ,\gamma \tau\right) \notag \\
 &={\check{\gamma}_{\tau}}^{\wt [v]}\sum_{k=1}^N A_{k}^{\nu} \sum_{\sigma_1 \in I(U_1)} D_{\sigma_1} \sum_{U_2 \subseteq f(\sigma_1)} S^k \left( \prod_{r\in U_2} {\check {\gamma }_{\tau }}o(v_r); \left\{F_{U_2}^{\sigma_1}v,x\right\} ,\tau \right) \notag \\
 &\hspace{10mm} - \sum_{\sigma_1 \in I(U_1)} E_{\sigma_1} \sum_{U_1 \not =U_2 \subseteq f(\sigma_1)} S^{\nu} \left( \prod_{r\in U_2} o(v_r) ; \left\{ G_{U_2}^{\sigma_1}v, \frac{x}{\check{\gamma}_{\tau}}\right\},\gamma \tau\right). \label{dog1}
 \end{align}
 Let $\ell \geq 1$ be the length of the largest chain of proper subsets
\begin{equation}
 \emptyset =U_\ell \subset U_{\ell -1} \subset \cdots \subset U_1 =\{ 1,\dots ,n\} \label{nested}
\end{equation}
occurring after successive applications of Lemma \ref{LemmaB}. Then reapplying Lemma \ref{LemmaB} to the last term in (\ref{dog1}), and repeating this process on the corresponding term until we reach $U_\ell$, we obtain
 \begin{align}
 &S^{\nu} \left(\prod_{r=1}^n o(v_r) ;\left\{v,\frac{x}{\check{\gamma}_{\tau}}\right\} ,\gamma \tau\right) \notag \\
 &=\sum_{p=1}^{\ell -1} {\check{\gamma}_{\tau}}^{\wt [v]}\sum_{k=1}^N A_{k}^{\nu} \sum_{\sigma_1 \in I(U_1)} \sum_{\substack{U_2 \subseteq f(\sigma_1) \\ U_2 \not = U_1}} \cdots \sum_{\sigma_{p-1} \in I(U_{p-1})} \sum_{\substack{U_{p} \subseteq f(\sigma_{p-1}) \\ U_{p} \not = U_{p-1}}} \sum_{\sigma_{p} \in I(U_{p})} \sum_{U_{p+1} \subseteq f(\sigma_{p})} \notag \\
 &\hspace{15mm} \times (-1)^{p-1} E_{\sigma_1} \cdots E_{\sigma_{p-1}} D_{\sigma_{p}} S^k \left( \prod_{r\in U_{p+1}} {\check {\gamma }_{\tau }}o(v_r); \left\{F_{U_{p+1}}^{\sigma_{p}}H_{U_{p}}^{\sigma_{p-1}}\cdots H_{U_2}^{\sigma_{1}}v,x\right\} ,\tau \right) \notag \\
 &\hspace{5mm} +{\check{\gamma}_{\tau}}^{\wt [v]} \sum_{k=1}^N A_{k}^{\nu} \sum_{\sigma_1 \in I(U_1)} \sum_{\substack{U_2 \subseteq f(\sigma_1) \\ U_2 \not = U_1}} \cdots \sum_{\sigma_{\ell -2} \in I(U_{\ell -2})} \sum_{\substack{U_{\ell -1} \subseteq f(\sigma_{\ell -2}) \\ U_{\ell -1} \not = U_{\ell -2}}} \sum_{\sigma_{\ell -1} \in I(U_{\ell -1})} \sum_{\substack{\emptyset =U_{\ell} \subseteq f(\sigma_{\ell -1}) \\U_{\ell} \not = U_{\ell -1}}} \notag \\
 &\hspace{15mm} \times (-1)^{\ell -1} E_{\sigma_1} \cdots E_{\sigma_{\ell -2}} E_{\sigma_{\ell -1}} S^{k} \left(1; \left\{H_{U_\ell}^{\sigma_{\ell -1}}H_{U_{\ell -1}}^{\sigma_{\ell -2}}\cdots H_{U_2}^{\sigma_{1}}v, x\right\}, \tau \right), \label{cat1}
 \end{align}
 where the last equality also uses Lemma \ref{LemmaA}.\\
\indent There may be $\sigma_j =1$ in (\ref{cat1}). In this case, the condition $U_{j+1} \not =U_j$ is needed in the sum to ensure an iteration of Lemma \ref{LemmaB} on the last term. Otherwise, $U_{j+1} \not = U_j$ by default. Since $\sigma_j \in I(U_j) \subseteq f(\sigma_{j-1})$, we have $m(\sigma_1) \cap \cdots \cap m(\sigma_{p-1}) =\emptyset$. Therefore, $\sigma_1 +\cdots +\sigma_{p-1} =\sigma'$ for some $\sigma' \in I(n)$. In fact, for any $\sigma' \in I(n)$ there are $\widetilde{\sigma}_1 , \dots ,\widetilde{\sigma}_{p-1} \in I(n)$ satisfying the conditions in (\ref{cat1}) such that $\widetilde{\sigma}_1 + \cdots +\widetilde{\sigma}_{p-1}=\sigma'$. Meanwhile, $\sigma'$ may also be written as $\sigma' =\psi_1 +\cdots +\psi_i$ for some $\psi_1 ,\dots ,\psi_i \in I(n)$ with no additional conditions. Such a reformulation not only affects the construction of $U_2 , \dots , U_{p}$, but $H_{U_{p}}^{\sigma_{p-1}} \cdots H_{U_2}^{\sigma_1}$ as well.\\
\indent We want to reformulate (\ref{cat1}) in terms of general involutions $\psi_1 , \dots ,\psi_i$ and sets $U\subseteq W \subseteq f(\sigma')$ which accomplish the same expression as the $\sigma_1 ,\dots ,\sigma_{\ell -1}$ and $U_1 ,\dots ,U_{\ell -1}$ that form the chain (\ref{nested}), but without the nested restrictions.\\ 
 \indent For starters, by Lemma \ref{LemmaC} we have
 \[
 \sum_{\psi_1 +\cdots + \psi_i =\sigma'} (-1)^i E_{\psi_1} \cdots E_{\psi_i} =(-1)^{\frac{\lvert m(\sigma')\rvert}{2}} E_{\sigma'}.
 \] 
 Set $U:=U_{p+1}$ and take $W$ to be the set $W\subseteq f(\sigma')$ such that
 \[
 f(\sigma') \setminus W =\bigsqcup_{j=1}^{p-1} f(\sigma_j)\setminus U_{j+1}.
 \]
 Note $U\subseteq W\subseteq f(\sigma')$ since $\sigma' \in I(n)$. On one hand, this shows $H_{U_{p}}^{\sigma_{p-1}} \cdots H_{U_2}^{\sigma_1} =H_W^{\sigma'}$. On the other, there may be many ways to obtain this equality resulting from different Lemma \ref{LemmaB} iterations. Note, however, that the maximal number of elements to choose such a fixed $W \subseteq f(\sigma')$ in this way is $\lvert f(\sigma')\setminus W \rvert$. The number of ways to choose this corresponding to $j$ iterations of Lemma \ref{LemmaB} is $\binom{\lvert f(\sigma')\setminus W\rvert}{j}$.\\ 
 \indent Both the choices of $\sigma_1 ,\dots ,\sigma_p$ and $U_2, \dots ,U_p$ in (\ref{cat1}) contribute to the number of iterations resulting from Lemma \ref{LemmaB}. Therefore, the $(-1)^{p-1}$ corresponds to the choices $\psi_1 ,\dots ,\psi_i$ for $\psi_1 +\cdots +\psi_i =\sigma_1 +\cdots +\sigma_{p-1}$ and also from the $W\subseteq f(\sigma')$ corresponding to the $U_{2} ,\dots ,U_{p-1}$. We can now rewrite (\ref{cat1}) as
 \begin{align}
 &-{\check{\gamma}_{\tau}}^{\wt [v]}\sum_{k=1}^N A_{k}^{\nu} \sum_{\sigma \in I(U_1)} \sum_{\sigma' +\sigma'' =\sigma} \sum_{\substack{i\geq 1 \\ \psi_1 +\cdots +\psi_i =\sigma'}} (-1)^i E_{\psi_1}\cdots E_{\psi_i} D_{\sigma''} \notag \\
 &\hspace{10mm} \times \sum_{U\subseteq W\subseteq f(\sigma)} \sum_{\substack{j\geq 1 \\ H_{U_{j+1}}^{\sigma_{j}} \cdots H_{U_2}^{\sigma_1} =H_W^\sigma}} (-1)^j S^k \left( \prod_{r\in U} {\check {\gamma }_{\tau }}o(v_r); \left\{F_{U}^{W}H_{W}^{\sigma}v,x\right\} ,\tau \right), \label{cat100}
 \end{align}
 where the minus sign in front corresponds to the final iteration which introduces the $D_{\sigma''}$ and $F_U^W$ terms (and also the $E_{\sigma''}$ and $H_U^W$ terms which are now contained in the expression corresponding to $\sigma'' =1$ and $U=W=\emptyset$). Therefore, by the discussion above and the fact $\sum_{j\geq 1}^N (-1)^j \binom{N}{j} =-(-1)^N$, we find (\ref{cat100}) becomes
  \begin{align*}
 &-{\check{\gamma}_{\tau}}^{\wt [v]}\sum_{k=1}^N A_{k}^{\nu} \sum_{\sigma \in I(U_1)} \sum_{\sigma' +\sigma'' =\sigma} (-1)^{\frac{\lvert m(\sigma')\rvert}{2}} E_{\sigma'} D_{\sigma''} \\
 &\hspace{15mm} \times \sum_{U\subseteq W\subseteq f(\sigma)} \sum_{j\geq 1} (-1)^j \binom{\lvert f(\sigma)\setminus W \rvert}{j} S^k \left( \prod_{r\in U} {\check {\gamma }_{\tau }}o(v_r); \left\{F_{U}^{W}H_{W}^{\sigma}v,x\right\} ,\tau \right) \\
 &\hspace{5mm}={\check{\gamma}_{\tau}}^{\wt [v]}\sum_{k=1}^N A_{k}^{\nu} \sum_{\sigma \in I(U_1)} \sum_{\sigma_1 +\sigma_2 =\sigma} (-1)^{\frac{\lvert m(\sigma_1)\rvert}{2}} E_{\sigma_1} D_{\sigma_2} \\
 &\hspace{15mm} \times \sum_{U\subseteq W\subseteq f(\sigma)} (-1)^{\lvert f(\sigma)\setminus W \rvert} S^k \left( \prod_{r\in U} {\check {\gamma }_{\tau }}o(v_r); \left\{F_{U}^{W}H_{W}^{\sigma}v,x\right\} ,\tau \right).
 \end{align*}
 Applying Lemma \ref{LemmaD} under the assumption $v_{s}(0)v=0$ gives the desired result.
 \end{proof}
 

\subsection{Step five\label{Step5}}

 Before proving Theorem \ref{ThmMain}, we first establish the following lemma.
 
\begin{lemma}\label{LemNew}
 Suppose $J(0)v =\alpha v$ and $K(0)v=\beta v$ for $\alpha ,\beta \in \mathbb{C}$. If $\alpha \not =0$ or $\beta \not =0$, then $\Phi_j \left(v:(J,K),\tau \right)=0$ for all $\tau \in \mathbb{H}$ and $1\leq j\leq N$.
\end{lemma}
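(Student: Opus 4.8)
The plan is to exploit a symmetry/eigenvalue mismatch argument of the kind that shows characters twisted by a nontrivial group element vanish. Concretely, one wants to compare the operator $o(J)$ (or $o(K)$) against itself via two descriptions of its action inside the trace, and conclude the trace must be zero whenever $v$ is a nonzero eigenvector with nonzero eigenvalue. First I would recall that by assumption $J,K\in V_1$ are quasi-primary with $\alpha(n)\beta=\delta_{n,1}\langle\alpha,\beta\rangle\mathbf 1$, so by Lemma \ref{LemWt1} we have $J[0]=J(0)$, $K[0]=K(0)$, $J,K\in V_{[1]}$, and the modes $J[s],K[t]$ commute among themselves; in particular $o(J)=J(0)=J[0]$ and $o(K)=K(0)=K[0]$ as operators on any module.

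The key step is a conjugation identity. On the module $M^j$, write $g:=e^{2\pi i(o(J)+\langle J,K\rangle/2)}$ and $h(q):=q^{o(\widetilde\omega)+o(K)+\langle K,K\rangle/2}$, so that $\Phi_j(v:(J,K),\tau)=\tr_{M^j}\,g\,o(v)\,h(q)$. Because $o(J)$ is a weight-zero mode that acts as a derivation compatible with the $L[0]$-grading, one has the commutation relation $[o(J),o(v)]=o(J[0]v)=o(J(0)v)=\alpha\,o(v)$ when $J(0)v=\alpha v$ — this is the standard fact $[u(0),o(v)]=o(u(0)v)$ for $u\in V_1$, together with $u[0]=u(0)$. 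Consequently $g\,o(v)\,g^{-1}=e^{2\pi i\,\mathrm{ad}(o(J))}o(v)=e^{2\pi i\alpha}o(v)$. Similarly, since $o(K)$ acts semisimply and commutes with $o(\widetilde\omega)$ (as $K$ is quasi-primary and $[o(K),o(\widetilde\omega)]=o(K(0)\widetilde\omega)=0$, using $K(0)\omega=-L(1)K=0$), the factor $q^{o(K)}$ conjugates $o(v)$ by $q^{\beta}$ when $K(0)v=\beta v$. Now I would use cyclicity of the trace: move $g$ around the trace, $\tr\,g\,o(v)\,h(q)=\tr\,o(v)\,h(q)\,g=\tr\,g^{-1}\big(g\,o(v)\,h(q)\,g\big)=\tr\,\big(g^{-1}o(v)g\big)\big(g^{-1}h(q)g\big)$. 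Since $g$ commutes with $h(q)$ (both are built from the commuting operators $o(J),o(K),o(\widetilde\omega)$), this equals $e^{-2\pi i\alpha}\tr\,o(v)\,h(q)\,g=e^{-2\pi i\alpha}\Phi_j(v:(J,K),\tau)$ — wait, the cleaner route is: $\Phi_j=\tr\,g\,o(v)\,h(q)=\tr\,o(v)\,h(q)\,g=\tr\,o(v)\,g\,h(q)$, and then conjugating $o(v)$ by $g$ inside gives $e^{2\pi i\alpha}$ times the same thing, forcing $(e^{2\pi i\alpha}-1)\Phi_j=0$; running the analogous argument with the one-parameter family $q^{s\,o(K)}$, or rather tracking the $q$-expansion degree by degree, gives $(e^{2\pi i\beta\log q/\cdots}-1)$-type obstruction, i.e. a genuine functional constraint forcing $\Phi_j\equiv 0$ unless $\alpha\in\mathbb Z$ and $\beta$ produces no shift.

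The honest formulation — and the one I would actually write — handles $\alpha$ and $\beta$ simultaneously by grading: decompose the trace over $L[0]$-eigenspaces (or $o(\widetilde\omega)$-eigenspaces) of $M^j$. On each such finite-dimensional graded piece $g\,o(v)$ shifts degree by $\wt[v]$ and, by the conjugation relations above, the contribution of that piece to $\tr\,g\,o(v)\,h(q)$ picks up a factor that transforms under $o(J)\mapsto o(J)$-conjugation by $e^{2\pi i\alpha}$ and under the $q^{o(K)}$ part by $q^{\beta}$; comparing the trace computed directly with the trace computed after cyclic rotation of $g$ (respectively of a formal group element $\exp(2\pi i s\, o(K))$ and letting the $q$-grading absorb it) yields $\Phi_j=e^{2\pi i\alpha}q^{\beta}\Phi_j$ as formal $q$-series, whence $(1-e^{2\pi i\alpha}q^{\beta})\Phi_j=0$ identically; since $1-e^{2\pi i\alpha}q^{\beta}$ is not the zero series when $(\alpha,\beta)\neq(0,0)$ — indeed $q^{\beta}$ is a nonzero monomial (power of $q$ up to an overall $q^{\beta}$ prefactor common to $\Phi_j$ anyway, so one compares leading coefficients) — we conclude $\Phi_j=0$. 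The main obstacle is bookkeeping: making precise that $o(v)$, $g$, and $h(q)$ interact exactly through the scalar $e^{2\pi i\alpha}$ and the shift $q^{\beta}$, i.e. verifying $[o(J),o(v)]=\alpha\,o(v)$ and $[o(K),o(v)]=\beta\,o(v)$ and that $o(J),o(K),o(\widetilde\omega)$ pairwise commute (the latter two from quasi-primality via $K(0)\omega=0$, $J(0)\omega=0$, and the former from $L(1)J=L(1)K=0$ together with $J(1)K=\langle J,K\rangle\mathbf 1\in V_0$ so $[o(J),o(K)]=o(J(0)K)=0$); once these are in hand the trace manipulation is immediate.
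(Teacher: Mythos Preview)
Your core manipulation is the same as the paper's: both arrive at the identity
\[
\Phi_j(v:(J,K),\tau)\;=\;e^{2\pi i\alpha}\,q^{\beta}\,\Phi_j(v:(J,K),\tau),
\]
obtained by moving $e^{2\pi i\,o(J)}$ (and then $q^{o(K)}$) past $o(v)$ using $[o(J),o(v)]=\alpha\,o(v)$, $[o(K),o(v)]=\beta\,o(v)$, together with cyclicity of the trace. The paper does this in two lines; your write-up reaches the same place after considerably more scaffolding.

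Where you diverge from the paper is in how you conclude from this identity, and that step in your proposal is not sound as written. You argue ``as formal $q$-series'' that $1-e^{2\pi i\alpha}q^{\beta}$ is nonzero, hence $\Phi_j=0$. Two problems: first, $\Phi_j$ is not a formal power series in $q$ in any usable sense here, since $q^{o(K)}$ contributes complex shifts to the exponents and there is no well-ordering to support a ``leading coefficient'' argument; second, even granting a formal reading, $1-e^{2\pi i\alpha}q^{\beta}$ \emph{is} zero when $\alpha\in\mathbb{Z}$ and $\beta=0$, so your claim that it is nonzero whenever $(\alpha,\beta)\neq(0,0)$ is false. The paper avoids both issues by invoking assumption (v) explicitly: it treats $\Phi_j$ as an honest holomorphic function of $\tau\in\mathbb H$, observes that $(1-e^{2\pi i\alpha}q^{\beta})\Phi_j(\tau)=0$ forces $\Phi_j(\tau)=0$ at every $\tau$ with $\alpha+\beta\tau\notin\mathbb Z$, and then uses analytic continuation from an open set to all of $\mathbb H$. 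You should replace your formal-series paragraph with that analytic step.
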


\begin{proof}
 Since
 \[
 \tr_{M^j} e^{2\pi i o(J)} o(v) q^{o(\widetilde \omega)+o(K)} = q_{\alpha} \tr_{M^j} o(v) e^{2\pi i o(J)} q^{o(\widetilde \omega)+o(K)}
 =q_{\alpha} q^{\beta} \tr_{M^j} e^{2\pi i o(J)} o(v) q^{o(\widetilde \omega)+o(K)},
 \]
 we have
 \[
 \left(1-q_{\alpha} q^{\beta}\right) \tr_{M^j} e^{2\pi i o(J)} o(v) q^{o(\widetilde \omega)+o(K)} =0.
 \]
 Therefore, $\Phi_j \left(v:(J,K),\tau \right)=0$ for all $\tau \in \mathbb{H}$ such that $\alpha +\tau \beta \not \in \mathbb{Z}$. It follows that $\Phi_j \left(v:(J,K),\tau \right)=0$ on an open ball in $\mathbb{H}$ and by our assumption that $\Phi_j$ is convergent on $\mathbb{H}$, this extends to all of $\mathbb{H}$.
\end{proof}

 We are now in position to prove Theorem \ref{ThmMain} and Corollary \ref{ThmMain2}.\\
 
 \noindent \textit{Proof of Theorem \ref{ThmMain}.} \,\,\,
 We begin by mimicking the proof of the Main Theorem in \cite{Miy-Theta} and fix $s,t \geq 0$ so that $s+t=n$, and assume
 \[
 v_j =\begin{cases} J &\text{ if } 1\leq j\leq s \\ K &\text{ if } s+1 \leq j \leq t. \end{cases}
 \]
 For $\sigma \in I(n)$, partition the set $\Omega =\Omega_n =\{ 1,\dots ,n\}$ by setting
 \begin{align*}
 f_{1}(\sigma) := \{ j\in \Omega \mid j=\sigma (j) \leq s\}, \hspace{10mm}  m_{11}(\sigma) &:= \{ j\in \Omega \mid j<\sigma (j) \leq s \},   \\
 f_{2}(\sigma) := \{j\in \Omega \mid s<j=\sigma (j) \}, \hspace{10mm} m_{12}(\sigma) &:= \{ j\in \Omega \mid j \leq s < \sigma (j) \},  \\
 \text{ and } \hspace{10mm} m_{22}(\sigma) &:= \{ j\in \Omega \mid s< j <\sigma (j) \}.
 \end{align*}
  For $p,q,r\in \mathbb{Z}$, set $m_1 := s-2p-r \geq 0$ and $m_2 :=t-2q -r \geq 0$.\\
 \indent There are $\binom{s}{r}$ and $\binom{t}{r}$ many ways to choose $r$ elements from $s$ and $t$ many elements, respectively. Having chosen $r$ many elements from both, there are $\binom{2p +m_1}{m_1} \frac{(2p)!}{p! 2^p}$ many ways to choose $\sigma$ such that $\lvert m_{11}(\sigma) \rvert = 2p$ and $\lvert f_{1}(\sigma) \rvert =m_1$ on $\{1, \dots ,s\}$, excluding the $r$ many elements in $m_{12}(\sigma)$ since $s-r=2p+k$. Similarly, there are $\binom{2q+m_2}{m_2} \frac{(2q)!}{q! 2^q}$ many such $\sigma$ with $\lvert m_{22}(\sigma) \rvert =2q$ and $\lvert f_2 (\sigma) \rvert = m_2$. Finally, there are $r!$ many ways which a $\sigma \in I(n)$ under these restrictions can place the $r$ many elements  $j\in m_{12}(\sigma)$ with $j\leq s$ into the $r$ spots above $s$. Therefore, the total number of $\sigma \in I(n)$ with $\lvert m_{11}(\sigma)\rvert /2=p$, $\lvert m_{12}(\sigma)\rvert /2=r$, $\lvert m_{22}(\sigma)\rvert /2=q$, $\lvert f_{1}(\sigma)\rvert =m_1$, and $\lvert f_{2}(\sigma)\rvert =m_2$ is
 \[
 \binom{s}{r}\binom{2p +m_1}{m_1} \frac{(2p)!}{p! 2^p} \binom{t}{r}\binom{2q+m_2}{m_2} \frac{(2q)!}{q! 2^q} r! =\frac{(2p+r+m_1)!(2q+r+m_2)!}{m_1 !m_2 ! r!p!q! 2^{p+q}}.
 \]
 By Lemma \ref{LemNew}, we may assume $o(J)v=o(K)v=0$. Using Proposition \ref{HardThm1}, we note
 \begin{align*}
 &S^{\nu} \left( o(J)^s o(K)^t ; \left\{v,\frac{x}{c\tau +d} \right\}, \frac{a\tau +b}{c\tau +d} \right) \notag \\
 &= \sum_{k=1}^N A_{k}^{\nu} \sum_{\sigma \in I(s+t)} \left(\frac{\langle J,J\rangle c{\check{\gamma}_{\tau}}}{2\pi i}\right)^{\lvert m_{11}(\sigma)\rvert} \left(\frac{\langle J,K\rangle c{\check{\gamma}_{\tau}}}{2\pi i}\right)^{\lvert m_{12}(\sigma)\rvert} \left(\frac{\langle K,K\rangle c{\check{\gamma}_{\tau}}}{2\pi i}\right)^{\lvert m_{22}(\sigma)\rvert} \notag \\
 &\hspace{5mm} \times \sum_{i=0}^{m_1} \sum_{j=0}^{m_2} S^k \left( \left[{\check{\gamma}_{\tau}}o(J)\right]^i \left[{\check{\gamma}_{\tau}}o(K)\right]^j ;\left\{ \left[\frac{c}{2\pi i}J[1]\right]^{m_1 -i} \left[\frac{c}{2\pi i}K[1]\right]^{m_2 -j} v, x\right\},\tau \right) \notag \\
&= \sum_{k=1}^N A_{k}^{\nu} \sum_{p,q,r} \frac{(2p+r+m_1)!(2q+r+m_2)!}{m_1 !m_2 ! r!p!q! 2^{p+q}} \notag \\
&\hspace{5mm} \times \left(\frac{\langle J,J\rangle c{\check{\gamma}_{\tau}}}{2\pi i}\right)^{p} \left(\frac{\langle J,K\rangle c{\check{\gamma}_{\tau}}}{2\pi i}\right)^{r} \left(\frac{\langle K,K\rangle c{\check{\gamma}_{\tau}}}{2\pi i}\right)^{q} \notag \\
 &\hspace{5mm} \times \sum_{i=0}^{m_1} \sum_{j=0}^{m_2} S^k \left( \left[{\check{\gamma}_{\tau}}o(J)\right]^i \left[{\check{\gamma}_{\tau}}o(K)\right]^j ;\left\{ \left[\frac{c}{2\pi i}J[1]\right]^{m_1 -i} \left[\frac{c}{2\pi i}K[1]\right]^{m_2 -j} v, x\right\},\tau \right). \notag 
 \end{align*}
 Therefore,
 \begin{align*}
 &S^{\nu} \left( e^{2\pi i \left(o(J)+\frac{\langle J,K \rangle}{2} +\gamma \tau \left(o(K) +\frac{\langle K,K\rangle}{2}\right) \right)} ; \left\{v,\frac{x}{c\tau +d}\right\}, \frac{a\tau +b}{c\tau +d} \right) \notag \\
 &=\sum_{\ell_1 ,\ell_2 ,\ell_3 ,\ell_4} \frac{(2\pi i)^{\ell_1 +\ell_2 +\ell_3 +\ell_4}}{\ell_1 ! \ell_2 ! \ell_3 ! \ell_4 !}  \left(\frac{\langle J,K\rangle}{2}\right)^{\ell_3} \left(\gamma \tau \frac{\langle K,K\rangle}{2}\right)^{\ell_4} \notag \\
 &\hspace{5mm}  \times \sum_{k=1}^N A_{k}^{\nu} \sum_{\substack{p,q,r,m_1 ,m_2 \\ 2p+r+m_1 =\ell_1 \\2q+r+m_2 =\ell_2}} \frac{(2p+r+m_1)!(2q+r+m_2)!}{m_1 !m_2 ! r!p!q! 2^{p+q}} \left( \gamma \tau \right)^{2q+r+m_2} \notag \\
&\hspace{5mm} \times \left(\frac{\langle J,J\rangle c{\check{\gamma}_{\tau}}}{2\pi i}\right)^{p} \left(\frac{\langle J,K\rangle c{\check{\gamma}_{\tau}}}{2\pi i}\right)^{r} \left(\frac{\langle K,K\rangle c{\check{\gamma}_{\tau}}}{2\pi i}\right)^{q} \notag \\
 &\hspace{5mm} \times \sum_{i=0}^{m_1} \sum_{j=0}^{m_2} S^k \left( \left[{\check{\gamma}_{\tau}}o(J)\right]^i \left[{\check{\gamma}_{\tau}}o(K)\right]^j ;\left\{ \left[\frac{c}{2\pi i}J[1]\right]^{m_1 -i} \left[\frac{c}{2\pi i}K[1]\right]^{m_2 -j} v, x\right\},\tau \right) \notag \\
 &=\sum_{p,q,r,m_1 ,m_2,\ell_3 ,\ell_4} \frac{(2\pi i)^{p+q+r+m_1 +m_2+\ell_3 +\ell_4}}{m_1 !m_2 ! r!p!q! \ell_3 ! \ell_4 !}  \left(\frac{\langle J,K\rangle}{2}\right)^{\ell_3} \left(\gamma \tau \frac{\langle K,K\rangle}{2}\right)^{\ell_4} \notag \\
 &\hspace{5mm}  \times \sum_{k=1}^N A_{k}^{\nu} \left( \gamma \tau \right)^{2q+r+m_2} \left(\frac{\langle J,J\rangle c{\check{\gamma}_{\tau}}}{2}\right)^{p} \left(\langle J,K\rangle c{\check{\gamma}_{\tau}}\right)^{r} \left(\frac{\langle K,K\rangle c{\check{\gamma}_{\tau}}}{2}\right)^{q} \notag \\
 &\hspace{5mm} \times \sum_{i=0}^{m_1} \sum_{j=0}^{m_2} S^k \left( \left[{\check{\gamma}_{\tau}}o(J)\right]^i \left[{\check{\gamma}_{\tau}}o(K)\right]^j ;\left\{ \left[\frac{c}{2\pi i}J[1]\right]^{m_1 -i} \left[\frac{c}{2\pi i}K[1]\right]^{m_2 -j} v, x\right\},\tau \right) \notag \\
 &=\sum_{p,q,r,m_1 ,m_2,\ell_3 ,\ell_4} \frac{(2\pi i)^{p+q+r+m_1 +m_2+\ell_3 +\ell_4}}{m_1 !m_2 ! r!p!q! \ell_3 ! \ell_4 !}  \left(\frac{\langle J,K\rangle}{2}\right)^{\ell_3} \left(\gamma \tau \frac{\langle K,K\rangle}{2}\right)^{\ell_4} \notag \\
 &\hspace{5mm}  \times \sum_{k=1}^N A_{k}^{\nu} \left(\frac{\langle J,J\rangle c{\check{\gamma}_{\tau}}}{2}\right)^{p} \left(\langle J,K\rangle c{\hat{\gamma}^{\tau}}\right)^{r} \left(\frac{\langle K,K\rangle c\left({\hat{\gamma}^{\tau}}\right)^2}{2{\check{\gamma}_{\tau}}}\right)^{q} \notag \\
 &\hspace{5mm} \times \sum_{i=0}^{m_1} \sum_{j=0}^{m_2} S^k \left( \left[{\check{\gamma}_{\tau}}o(J)\right]^i \left[{\hat{\gamma}^{\tau}}o(K)\right]^j ;\left\{ \left[\frac{c}{2\pi i}J[1]\right]^{m_1 -i} \left[\frac{c{\hat{\gamma}^{\tau}}}{2\pi i {\check{\gamma}_{\tau}}}K[1]\right]^{m_2 -j} v, x\right\},\tau \right) \notag \\
 &=\sum_{k=1}^N A_{k}^{\nu} S^k \biggl ( e^{2\pi i \left[{\check{\gamma}_{\tau}}o(J) +{\hat{\gamma}^{\tau}}o(K)  \right]} e^{2\pi i \left[ \frac{\langle K,K\rangle}{2} \left( \gamma \tau +\frac{c\left({\hat{\gamma}^{\tau}}\right)^2}{\check{\gamma}_{\tau}}\right) + \frac{\langle J,K \rangle}{2} \left( 1+2c{\check{\gamma}_{\tau}}\right) +\frac{\langle J,J\rangle}{2} c{\check{\gamma}_{\tau}} \right] }   ; \notag \\
 &\hspace{10mm} \left\{ e^{cJ[1]} e^{\frac{c{\hat{\gamma}^{\tau}}}{c\tau +d} K[1]} v,x\right\} ,\tau \biggl ) \notag \\
 &=\sum_{k=1}^N A_{k}^{\nu} S^k \biggl ( e^{2\pi i \left(o(bK+dJ) +\frac{1}{2}\langle bK+dJ, aK+cJ\rangle \right)} q^{\left(o(aK +cJ) +\frac{1}{2}\langle aK +cJ,aK+cJ\rangle  \right)}  ; \left\{ e^{cJ[1]} e^{\frac{{a\tau +b}}{c\tau +d} cK[1]} v,x\right\} ,\tau \biggl ),
 \end{align*}
 where we used that
 \[
 \frac{a\tau +b}{c\tau +d} +\frac{c\left(a\tau +b \right)^2}{c\tau +d} = a{\hat{\gamma}^{\tau}},
 \,\,\,\,\, 1+2c(c\tau +d) =2ac\tau +cd +ad,
 \]
 and
 \begin{equation}
 \frac{c(a\tau +b)}{c\tau +d} =a-\frac{1}{c\tau +d} \label{baaah}
 \end{equation}
 for $\gamma \in \text{SL}_2(\mathbb{Z})$. It follows that
 \begin{align}
 \Phi_{\nu} &\left(v \colon \left(J,K\right), \frac{a\tau +b}{c\tau +d}\right) ={\check{\gamma}_{\tau}}^{\wt [v]} \sum_{k=1}^N A_{k}^{\nu} \Phi_k \left( e^{cJ[1]} e^{\frac{{a\tau +b}}{c\tau +d} cK[1]} v \colon \left(bK+dJ , aK+cJ\right) ,\tau \right) \notag \\
 &={\check{\gamma}_{\tau}}^{\wt [v]}\sum_{k=1}^N A_{k}^{\nu} \Phi_k \left( e^{(aK+cJ)[1]} e^{-\frac{1}{c\tau +d} K[1]} v \colon \left(bK+dJ , aK+cJ\right) ,\tau \right) \notag\\
 &=\sum_{n=0}^{\wt [v]} {\check{\gamma}_{\tau}}^{\wt [v] -n}\sum_{k=1}^N A_{k}^{\nu} \frac{1}{n!} \Phi_k \left( e^{(aK+cJ)[1]} \left(-K[1]\right)^n v \colon \left(bK+dJ , aK+cJ\right) ,\tau \right), \notag
 \end{align}
 proving Theorem \ref{ThmMain}. \hfill \qed \\
 
We complete this section by proving Corollary \ref{ThmMain2}.\\
 
 \noindent \textit{Proof of Corollary \ref{ThmMain2}.} \,\,\,
 Since $K[1]^\ell v \in V_{[\wt [v]-\ell]}$, equation (\ref{KTrans1}) gives
 \begin{align*}
 &\Phi_j \left(e^{K[1]} v:(J,K),\frac{a\tau +b}{c\tau +d} \right)= \sum_{\ell =0}^\infty \frac{1}{\ell !} \Phi_j \left(K[1]^\ell v:(J,K),\frac{a\tau +b}{c\tau +d} \right) \\
 &=\sum_{\ell =0}^\infty (c\tau +d)^{\wt [v] -\ell} \sum_{k=1}^N A_{j,k}^\gamma \Phi_k \left(e^{cJ[1]+\frac{a\tau +b}{c\tau +d}cK[1]} \frac{K[1]^\ell}{\ell !}v:(bK+dJ,aK+cJ),\tau \right) \\
 &=\sum_{\ell =0}^\infty (c\tau +d)^{\wt [v]} \sum_{k=1}^N A_{j,k}^\gamma \Phi_k \left(e^{aK[1]+ cJ[1]-\frac{1}{c\tau +d}K[1]} \frac{1}{\ell !}\left(\frac{K[1]}{c\tau +d}\right)^\ell v:(bK+dJ,aK+cJ),\tau \right) \\
 &= (c\tau +d)^{\wt [v]} \sum_{k=1}^N A_{j,k}^\gamma \Phi_k \left(e^{aK[1]+ cJ[1]-\frac{1}{c\tau +d}K[1] +\frac{1}{c\tau +d}K[1]} v:(bK+dJ,aK+cJ),\tau \right) \\
 &=(c\tau +d)^{\wt [v]} \sum_{k=1}^N A_{j,k}^\gamma \Phi_k \left(e^{aK[1]+cJ[1]} v:(J,K),\tau \right),
 \end{align*}
 as desired. \hfill \qed
 

\section{Application: lattice VOAs and theta functions\label{SectionExamples}}

 In this section we use the theory of VOAs, and in particular Theorem \ref{ThmMain}, to provide another proof of the modular transformation laws for derivatives of Jacobi theta functions. In doing so, we also study an example of a one-point theta function for an element of a VOA other than $v=\textbf{1}$.

 Let $V=V_{2\mathbb{Z}\alpha}$ be the lattice VOA constructed from the $1$-dimensional positive definite even lattice $2\mathbb{Z}\alpha$, where $\langle \alpha ,\alpha \rangle =1$. It is known (see \cite{Dong-Lattice}) that $V$ has the four inequivalent irreducible modules
 \[
 M^0=V, \hspace{5mm} M^1 =V_{\left(2\mathbb{Z}+\frac{1}{2}\right)\alpha}, \hspace{5mm} M^2 =V_{\left(2\mathbb{Z}+1\right)\alpha}, \hspace{5mm}\text{and}\hspace{5mm} M^3 =V_{\left(2\mathbb{Z}-\frac{1}{2}\right)\alpha}.
 \]
 Consider the Jacobi theta functions defined as 
 \[
 \vartheta_{hk} (\tau ,z):=\sum_{n\in \mathbb{Z}} e^{\pi i \left(n+\frac{h}{2}\right)^2 \tau +2\pi i\left(n+\frac{h}{2}\right)\left(z+\frac{k}{2}\right)},
 \]
 where $h,k=0,1$ (see \cite{Mumford}, for example, for more details). The transformation laws with respect to the matrix $S=\left(\begin{smallmatrix} 0&-1\\1&0 \end{smallmatrix}\right)$ are
 \[
 \vartheta_{hk} \left(-\frac{1}{\tau}, \frac{z}{\tau} \right) = i^{hk} (-i\tau)^{\frac{1}{2}} e^{\pi i \frac{z^2}{\tau}} \vartheta_{kh} (\tau ,z).
 \]
 Recall that $\alpha [n] \alpha  =\delta_{n,1}\alpha$ for $n \geq 0$. Set $D_x :=\frac{1}{2\pi i} \frac{d}{dx}$ for a variable $x$ and let $\vartheta' (\tau ,z):=D_z \vartheta (\tau ,z)$. Note that $D_z \Phi_{j} (\textbf{1}: \{z\alpha ,0 \},\tau )= \Phi_{j} (\alpha: \{z\alpha ,0 \},\tau )$ for any $j$. It can be shown that
 \[
 \vartheta_{hk} (\tau ,z)=\eta (\tau)i^{hk} \left(\Phi_{h} (\textbf{1}: \{z\alpha ,0 \},\tau)+(-1)^{k} \Phi_{2+h}(\textbf{1}: \{z\alpha ,0 \},\tau)\right),
 \]
 where $\eta (\tau)$ is the Dedekind eta-function and transforms as $\eta \left(-\frac{1}{\tau}\right) = (-i\tau)^{\frac{1}{2}} \eta (\tau)$. It also follows that
 \[
 \vartheta_{hk}' (\tau ,z)=\eta (\tau)i^{hk} \left(\Phi_{h} (\alpha : \{z\alpha ,0 \},\tau)+(-1)^{k} \Phi_{2+h}(\alpha : \{z\alpha ,0 \},\tau)\right).
 \]
 Meanwhile, the values $A_{h,j}^\gamma$, $0\leq j \leq 3$, from Theorem \ref{ThmMain} (where we have began our indexing of modules with $0$ here) are known when $\gamma$ is $S$ or the matrix $T=\left(\begin{smallmatrix} 1&1\\ 0&1 \end{smallmatrix}\right)$ (see \cite[Example $1$]{Gannon-Data}, for example). The values for the $S$-matrix are $(S_h^j):=(A_{h,j}^S )=(\frac{1}{2}e^{\frac{\pi ihj}{2}})$. Using Theorem \ref{ThmMain}, we find
 \begin{align*}
 \vartheta_{hk}' &\left(-\frac{1}{\tau}, \frac{z}{\tau}\right)=i^{hk}\eta \left(-\frac{1}{\tau}\right)\left[ \Phi_{h} \left(\alpha : \left(\frac{z\alpha}{\tau} ,0 \right),-\frac{1}{\tau} \right)+(-1)^{k} \Phi_{2+h}\left(\alpha : \left(\frac{z\alpha}{\tau} ,0 \right),-\frac{1}{\tau} \right) \right]\\
&=i^{hk}(-i\tau)^{\frac{1}{2}} \tau \eta (\tau)\sum_{j=0}^3 \left[ \left(S_{h}^{j}+(-1)^k S_{2+h}^{j}\right)\Phi_{j} \left(\alpha + \frac{z}{\tau}\textbf{1} : \left(0,\frac{z\alpha}{\tau} \right),\tau \right) \right]\\
&= (-i\tau)^{\frac{1}{2}}e^{\pi i \frac{z^2}{\tau}}  \tau \eta (\tau) i^{hk}  \left(\Phi_k \left(\alpha + \frac{z}{\tau}\textbf{1} : \left(z\alpha ,0 \right),\tau \right) +(-1)^h \Phi_{2+k}  \left(\alpha + \frac{z}{\tau}\textbf{1} : \left(z\alpha ,0 \right),\tau \right) \right)  \\
&= \tau (-i\tau)^{\frac{1}{2}}e^{\pi i \frac{z^2}{\tau}}  \left( \vartheta_{kh}'(\tau ,z) + \vartheta_{kh}(\tau ,z)\left(\frac{z}{\tau}\right) \right) .
 \end{align*}
 Additionally, using the values for the $T$-matrix $(T_h^j):=(A_{h,j}^T ) =(\delta_{h,j} e^{\frac{\pi i hj}{4} -\frac{\pi i}{12}})$, where $\delta_{h,j}$ is $1$ if $h=j$ and $0$ otherwise, we find
 \begin{align*}
 \vartheta_{hk}' &(\tau +1,z) =i^{hk} \eta (\tau +1) \left[ \Phi_h (\alpha :(z\alpha ,0),\tau +1) +(-1)^k \Phi_{2+h} (\alpha : (z\alpha ,0),\tau +1) \right] \\
 &=i^{hk} e^{\frac{\pi i}{12}}\eta (\tau ) \sum_{j=0}^3 \left[ T^j_h \Phi_j (\alpha :(z\alpha ,0),\tau) +(-1)^k T^j_{2+h} \Phi_{j} (\alpha : (z\alpha ,0),\tau) \right] \\
 &=i^{hk} \eta (\tau ) \left(\frac{\sqrt{2}}{2} +i\frac{\sqrt{2}}{2}\right)^h \left( \Phi_h (\alpha :(z\alpha ,0),\tau) - i^h (-1)^k \Phi_{2+h} (\alpha :(z\alpha ,0),\tau) \right) \\
  &= \delta_{h,0} \left(\delta_{k,0} \vartheta_{01}' (\tau ,z) + \delta_{k,1} \vartheta_{00}' (\tau ,z)\right) +\delta_{h,1} \frac{\sqrt{2}}{2} \left( \vartheta_{11}'(\tau ,z) +(-1)^k \vartheta_{10}'(\tau ,z) \right),
 \end{align*}
 which establishes the modular transformation laws for derivatives of Jacobi theta functions.

\end{document}